\title{Optimal Poincaré-Hardy-type Inequalities on Manifolds and Graphs}
\author{Florian Fischer and Christian Rose} 
\address{Florian Fischer: Institute for Applied Mathematics, University of Bonn, Endenicher Allee 60, 53115 Bonn, Germany}
\email{fischer@iam.uni-bonn.de}
\address{Christian Rose: Institute of Mathematics, University of Potsdam, Campus Golm, Haus 9,	Karl-Liebknecht-Str. 24-25,	14476 Potsdam, Germany }
\email{christian.rose@uni-potsdam.de}
\newtheorem{theorem}{Theorem}[section]
\newtheorem{lemma}[theorem]{Lemma}
\newtheorem{corollary}[theorem]{Corollary}
\theoremstyle{definition}
\newtheorem{example}[theorem]{Example}
\newtheorem*{remark}{Remark} 
\newtheorem{definition}[theorem]{Definition}
\numberwithin{equation}{section}
\newcommand{\norm}[1]{\left\lVert #1 \right\rVert} 
\newcommand{\abs}[1]{\left\lvert #1\right\rvert} 
\newcommand{\set}[1]{\left\{ #1\right\} }
\newcommand{\ip}[2]{\left\langle #1, #2 \right\rangle}
\newcommand{\sse}{\subseteq}
\renewcommand{\epsilon}{\varepsilon}
\renewcommand{\phi}{\varphi}
\newcommand{\NN}{\mathbb{N}}
\newcommand{\ZZ}{\mathbb{Z}}
\newcommand{\RR}{\mathbb{R}}
\newcommand{\SSS}{\mathbb{S}}
\newcommand{\TT}{\mathbb{T}}
\newcommand{\dd}{\,\mathrm{d}}
\DeclareMathOperator{\vol}{vol}
\DeclareMathOperator{\area}{area}
\newcommand{\EE}{E}
\newcommand{\HH}{\mathbb{H}}
\newcommand{\Hmm}[1]{\leavevmode{\marginpar{\tiny%
			$\hbox to 0mm{\hspace*{-0.5mm}$\leftarrow$\hss}%
			\vcenter{\vrule depth 0.1mm height 0.1mm width \the\marginparwidth}%
			\hbox to 0mm{\hss$\rightarrow$\hspace*{-0.5mm}}$\\\relax\raggedright #1}}}
\begin{document}
	
	\begin{abstract}
		We review a method to obtain optimal Poincaré-Hardy-type inequalities on the hyperbolic spaces, and discuss briefly generalisations {to certain classes of Riemannian manifolds}. 
		
		{Afterwards,} we recall a corresponding result on homogeneous regular trees and {provide} a new proof {using the aforementioned method}. The {same strategy will then be applied to obtain}  new optimal Hardy-type inequalities on weakly spherically symmetric graphs which include fast enough growing trees and anti-trees. {In particular, this} yields optimal weights which are larger at infinity than the optimal weights {classically} constructed via the Fitzsimmons ratio of the square root of the minimal positive Green's function.
		\\
		\\[4mm]
		\noindent  2020  \! {\em Mathematics  Subject  Classification.}
		Primary  35B09 \! ; Secondary  26D10; 31C12; 31C20; 35B09; 39A12; 58J60.
		\\[4mm]
		\noindent {\em Keywords.} Poincaré-Hardy-type inequalities, hyperbolic spaces, Damek-Ricci spaces, homogeneous trees, weakly spherically symmetric graphs
	\end{abstract}
\maketitle

\section{Introduction}

A century ago, in the attempt of finding a simple and elegant  proof of Hilbert's double series inequality, Hardy proved the following inequality on the standard line graph $\NN_0$: For all $\phi\colon \NN_0 \to \RR$ with $\phi (0)=0$, we have
\[ \sum_{n=0}^\infty \abs{\phi(n)-\phi(n+1)}^2 \geq \frac{1}{4} \sum_{n=1}^{\infty} \frac{\phi^2(n)}{n^2}.\]

Soon afterwards, he also proved its counterpart in the continuum, i.e., for all smooth and compactly supported functions $\phi\colon (0,\infty)\to \RR$ we have
\[ \int_0^\infty \abs{\phi'(x)}^2\dd x \geq \frac{1}{4}\int_0^\infty \frac{\phi^2(x)}{x^2}\dd x.\]
Because of Hardy's contributions, inequalities of this kind -- in the discrete or continuum -- nowadays bear his name. 
Versions of Hardy's inequality to $d$-dimensional Euclidean space and large classes of Riemannian manifolds including Cartan-Hadamard manifolds \cite{BEL15,C97} were the starting point of diverse generalizations to various contexts (see e.g.~\cite{Dav99,DAD14,  Gri99,KMP06, KOe09,KOe13, RS19,S22,Sturm24, TU21}). In this paper, we focus on certain classes of smooth Riemannian manifolds and locally finite graphs.

Generally speaking, on a measure space $(X,m)$, a \emph{Hardy inequality} with respect to a quadratic {form} $E$ holds on some reasonable large subset $F$ of $L^2(X,m)$ with norm $\norm{\cdot}$ if there is a non-trivial weight function $w$ such that 
\[ E(f)\geq  \norm{f}^2_{w}, \qquad f\in F,\]
where $\norm{\cdot}_w:=\norm{\cdot \sqrt{w}}$. The weight $w$ is then called a \emph{Hardy weight}. 

If $(X,m)$ is a measure space equipped with a (pseudo-)distance $d$, then the \emph{classical} Hardy weight is given by
\[  w= C d^{-2}(o,\cdot) 
\]
{$\text{for some } C> 0 \text{ and } o\in X$.}
{The optimal constant $C>0$ in the classical Hardy weight depends on the geometric properties of the underlying space. By now classical are the cases $\RR^d$ with constant $(d-2)^2/4$, $d\geq 3$, while for the real hyperbolic space $\HH^d$, it is $(d-1)^2/4$, $d\geq 2$.}
For further optimal constants for special subdomains or other spaces, see e.g.~\cite{B24, DP23, MMP98} and references therein.

If the weight is chosen to be the constant function $w=\lambda>0$, i.e.,
\[ E(f)\geq \lambda \norm{f}^2, \qquad  f\in F,\] the corresponding Hardy inequality {is referred to as} \emph{Poincaré inequality} or $L^2$-gap. The latter comes from the fact that the best constant in the Poincaré inequality is given by the bottom of the $L^2$-spectrum of the generator of $E$. The Poincaré inequality usually requires stronger assumptions than the classical Hardy inequality, e.g.~on Cartan-Hardamard manifolds the sectional curvature needs to be strictly negative. 

{Our main motivation is to show the existence of spaces where the classical Hardy and Poincar\'e inequalities can be combined to \emph{Poincaré-Hardy-type inequalities}.}
{To be more precise, we find spaces for which
\[ E(f)\geq \| f \|^2_{w}, \qquad  f\in F,\]
is optimal where
\[ w= \lambda + C d^{-2}(o,\cdot) + \tilde{w} 
\]
for $o\in X$, some constants $ \lambda, C> 0$ and function $\tilde{w}\geq 0$.}

The aim of this paper is twofold: Firstly, we will review a simple method to get optimal Hardy-type inequalities on special Cartan-Hadamard manifolds. In cases where we have specific knowledge about the volume density function and the bottom of the spectrum, it will turn out that we get optimal Poincaré-Hardy inequalities. We discuss the method exemplarily and in detail for the hyperbolic space and then review its  generalisations and extensions briefly.

Secondly, we will focus on the discrete counterpart. So far, only results about Poincaré-Hardy-type inequalities on trees are known, \cite{BSV21}. We show how these results can be generalised to the much larger class of weakly spherically symmetric graphs. Moreover, we provide explicit calculations of the optimal Hardy weights with our new method. This is a simplification in comparison with the standard strategy of computing the Fitzsimmons ratio of the square root of the minimal positive Green's function which is usually a hard task on graphs. Moreover, our method also results in weights which are larger at infinity than the weights computed with the standard method.

\section{Manifolds}\label{sec:manifolds}

Let us briefly comment on the strategy for obtaining optimal Hardy-type inequalities. Details will be provided in the next subsections. 

{Take} a function $u> 0$ which is superharmonic, i.e., $-\Delta u\geq 0$, and define the {so-called} Fitzsimmons ratio by $V=-\Delta u/u$.
The existence of a positive superharmonic function is provided by the Agmon-Allegretto-Piepenbrink theorem.  By standard {theory}, $V$ is a Hardy weight. Since $\sqrt u$ is also superharmonic, another Hardy weight is given by $W:=- \Delta \sqrt{u}/ \sqrt{u}$. Additional restrictions on $u$ then provide a certain optimality of $W$, cf.~\cite{DFP, DP16} or Subsection~\ref{subs:ot1}. These properties are naturally satisfied if $u$ is just the (minimal positive) Green's function.

In this article, we restrict our considerations to spaces $X$ with radially symmetric volume densities $f$. Consider
\[u(r)=r/f(r), \qquad r=d(o,x), x\in S_o(r), \]
where $S_o(r)$ denotes the distance sphere with center $o\in X$ and radius $r>0$. The radial symmetry of the Laplacian applied to $u$ leads to the superharmonicity of $u$. Hence, the corresponding function $W$ is a Hardy weight. A deeper analysis also provides its optimality. Moreover, in certain cases, this ansatz leads to Poincar{\'e}-Hardy type inequalities with an explicit weight which is larger than the Fitzsimmons ratio of the Green's function.

Note that since $V$ is generally not compactly supported, the fundamental theorem of optimality theory, cf.~\cite[Theorem~2.2]{DFP} or \cite[Theorem~9.2.1]{KPP20}, cannot be applied to obtain optimality. {The fundamental theorem is actually cooked up for $u$ being the  Green's function. However, this choice does not lead to Poincaré-Hardy-type inequalities on our manifolds, cf.~\cite{BGG17, FP23} for details.}

In the next subsection we recall basics of optimality theory. Thereafter, we show in detail the construction of optimal weights on $\HH^d$. We close this section by giving extensions to models and harmonic manifolds.

\subsection{A Short Reminder of Optimality Theory}\label{subs:ot1}

Let $(X,g)$ be a smooth Riemannian manifold of dimension $d$ and with measure $\mu$. Let $-\Delta \geq 0$ be the Laplace-Beltrami operator on $(X,g)$ extended in the sense of Friedrichs to a self-adjoint operator on $L^2(X,\mu)$. 

Its quadratic form is defined via
\[\EE(\phi):= \int_X \abs{\nabla \phi}^2\dd \mu, \qquad \phi\in C^\infty_c(X).\]
The correspondence becomes visible via Green's formula, i.e.,
\[ \EE(\phi)= \ip{-\Delta\phi}{\phi}, \qquad \phi\in C^\infty_c(X).\]
Here $\ip{\cdot}{\cdot}$ denotes the standard inner product on $L^2(X,\mu)$.

Let $\Omega \subseteq X$ be a domain and $V$ a function. A function $u \in C^\infty(\Omega)$ is called ($-\Delta+V$)-\emph{(super-) harmonic}  if $-\Delta u+V u = 0$ (resp., $-\Delta u+Vu \geq 0$) in $\Omega$. A ($-\Delta$)-(super-)harmonic function is simply called \emph{(super-)harmonic}.

Next, we {recall} the notion of \emph{optimality}. It was stated first in \cite{DFP} and is motivated by ideas of Agmon, \cite[page 6]{Ag82}. Here, we formulate it for the special case $\Omega=X\setminus\{o\}$ in order to apply it to our setting. 

\begin{definition}
	Let $(X,g)$ be a Riemannian manifold and $o \in X$
	and  $W \ge 0$ a non-trivial function such that the following Hardy-type inequality holds:
	\[\EE(\phi) \ge  \int_{X \setminus \{o\}} W |\phi|^2 \dd\mu, \qquad \phi \in C_c^\infty(X \setminus \{o\}). \]
	Then $W$ is called a \emph{Hardy weight} of $-\Delta$ on $X \setminus \{o\}$, and $-\Delta$ is called \emph{subcritical} on $X \setminus \{o\}$. Furthermore, $W$ is  \emph{optimal} with respect to $-\Delta$ in $X \setminus \{o\}$ if
	\begin{enumerate}
		\item\label{1} $-\Delta-W$ is \emph{critical} in $X \setminus \{o\}$, that is, there exists a unique (up to a multiplicative constant) positive smooth ($-\Delta -W$)-superharmonic function $u$  on $X \setminus \{o\}$. Such a function is ($-\Delta -W$)-harmonic on $X\setminus\{o\}$ and is called the \emph{(Agmon) ground state}.
		\item\label{2} $-\Delta-W$ is \emph{null-critical} with respect to $W$, i.e.,  $ u \not\in L^2(X \setminus \{o\}, W \mu)$.
	\end{enumerate}
\end{definition}
 \begin{remark}
 	Note that \eqref{1} holds if and only if for any $\widetilde W \gneq  W$, the Hardy-type inequality
 	\[ E(\phi) \ge \int_{X \setminus \{o\}} \widetilde W |\phi|^2 \dd \mu, \qquad \phi \in C_c^\infty(X \setminus \{o\}), \]
 	does not hold, see e.g.  \cite[Lemma~2.11]{KP20}.
 	
 	Moreover, \eqref{2} means that the Rayleigh-Ritz variational problem
 	\[ \inf_{\phi\in D^{1,2}(X\setminus\set{o})} \frac{E_V(\phi)}{\int_{X \setminus \{o\}} \widetilde W |\phi|^2 \dd \mu}\]
 	admits no minimiser, where $D^{1,2}(X\setminus\set{o})$ is the completion of $C_c^\infty(X\setminus\set{o})$ with respect to the norm $\phi \mapsto \sqrt{E(\phi)}$, cf. \cite{DFP, DP16}. Therefore, this condition is sometimes also called non-attainability, see e.g.~\cite{GKS22, SW24}.
 \end{remark}

{\begin{example}
		On $\RR^d$, $d\geq 3$, the classical Hardy weight 
		\[w(x)=\frac{(d-2)^2}{4 |o-x|^2}\] for some $o\in \RR^d$, is an optimal Hardy weight, cf.~\cite{DFP}. Also the Hardy inequality on $(0,\infty)$ presented in the introduction is optimal.
\end{example}}

\begin{remark}
	The original definition of an optimal Hardy weight by Devyver, Fraas and Pinchover also requires a condition called \emph{optimality at infinity} (which is part (b) in Definition~2.1 of \cite{DFP}, or see \cite[Definition~2.14]{KP20}). Recently, Kova\v{r}\'{i}k and Pinchover showed in \cite[Corollary~3.7]{KP20} that {for linear (not necessarily symmetric)
	second-order elliptic operators with real coefficients in divergence form}, null-criticality implies optimality at infinity. For that reason, it is not necessary to give the definition of optimality at infinity, since it is covered by Condition \eqref{2}. 
\end{remark}

{In order to obtain optimality of the Hardy weight $W:= -\Delta \sqrt{u}/\sqrt{u}$, where $u(r)=r/f(r), r=d(o,x), x\in S_o(r),$ we need to show criticality  of $W$. This will be achieved by the following two main tools: the Agmon-Allegretto-Piepenbrink theorem and a Khas’minski\u{\i} theorem. We will recall them here before turning to the proof of optimality of $W$ on $\HH^d$ in Theorem~\ref{thm:PHhyperbolic}.} 

The famous Agmon-Allegretto-Piepenbrink theorem says that the quadratic form is non-negative if and only if there exists a positive superharmonic function, see e.g.~\cite[Proposition~9.2.2]{KPP20} and references therein. Here, it is enough to have just one of the implications. For convenience we show the short proof, cf.~e.g.~{\cite[Theorem~1.5.12]{DavHeat}} or \cite[Section~2.2]{C21}.

\begin{lemma}[Agmon-Allegretto-Piepenbrink theorem]\label{lem:davies}
	Let $\Omega \subseteq X$ be a domain in a smooth Riemannian manifold $(X,g)$, $W\colon \Omega\to \RR$, and $u \in C^\infty(\Omega)$ be a positive ($-\Delta-W$)-harmonic function on $\Omega$. Then,
	\[ \EE(u\phi)- \norm{u\phi}^2_W = \int_\Omega u^2 \abs{\nabla \phi}^2 \dd \mu, \qquad \phi \in C_c^\infty(\Omega).\]
\end{lemma}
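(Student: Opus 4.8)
The plan is to establish the pointwise identity by a direct computation that unpacks how the Laplacian interacts with the product $u\phi$ and then integrates by parts against the compactly supported test function $\phi$. First I would record the product rule for the Laplace–Beltrami operator, namely
\[
\Delta(u\phi) = \phi\,\Delta u + 2\langle \nabla u, \nabla \phi\rangle + u\,\Delta \phi,
\]
valid pointwise on $\Omega$ since $u\in C^\infty(\Omega)$ and $\phi\in C_c^\infty(\Omega)$. Multiplying by $-u\phi$ and using Green's formula $\EE(u\phi) = \langle -\Delta(u\phi), u\phi\rangle$ — legitimate because $u\phi\in C_c^\infty(\Omega)$, as $u$ is smooth on the open set containing $\supp\phi$ — gives
\[
\EE(u\phi) = -\int_\Omega u\phi^2\,\Delta u\,\dd\mu - 2\int_\Omega u\phi\,\langle\nabla u,\nabla\phi\rangle\,\dd\mu - \int_\Omega u^2\phi\,\Delta\phi\,\dd\mu.
\]

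Next I would treat the last integral by integrating by parts: $-\int_\Omega u^2\phi\,\Delta\phi\,\dd\mu = \int_\Omega \langle \nabla(u^2\phi),\nabla\phi\rangle\,\dd\mu = \int_\Omega u^2|\nabla\phi|^2\,\dd\mu + 2\int_\Omega u\phi\,\langle\nabla u,\nabla\phi\rangle\,\dd\mu$, where the boundary term vanishes since $\phi$ has compact support in $\Omega$. Substituting this back, the two cross terms $\pm 2\int_\Omega u\phi\langle\nabla u,\nabla\phi\rangle\,\dd\mu$ cancel exactly, leaving
\[
\EE(u\phi) = -\int_\Omega u\phi^2\,\Delta u\,\dd\mu + \int_\Omega u^2|\nabla\phi|^2\,\dd\mu.
\]
Finally I would invoke the hypothesis that $u$ is $(-\Delta-W)$-harmonic, i.e. $-\Delta u = Wu$ on $\Omega$, so that the first term becomes $\int_\Omega W u^2\phi^2\,\dd\mu = \norm{u\phi}_W^2$ by definition of the weighted norm. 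Rearranging yields the claimed identity $\EE(u\phi) - \norm{u\phi}_W^2 = \int_\Omega u^2|\nabla\phi|^2\,\dd\mu$.

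I do not expect any serious obstacle here; the proof is a standard "ground state transform" or "$h$-transform" computation. The only points requiring a word of care are the justifications that all the integrations by parts have no boundary contributions — which follows from $\phi\in C_c^\infty(\Omega)$ and the smoothness of $u$ on $\Omega$, so every integrand appearing is smooth and compactly supported in $\Omega$ — and the use of Green's formula, which applies because $u\phi\in C_c^\infty(\Omega)\subseteq C_c^\infty(X)$. If one prefers, the whole argument can be phrased via the pointwise Leibniz identity
\[
-\Delta(u\phi)\cdot u\phi = -u^2\phi\,\Delta\phi - u\phi^2\,\Delta u - 2u\phi\,\langle\nabla u,\nabla\phi\rangle
\]
together with $-\Div(u^2\phi\,\nabla\phi) = -u^2\phi\,\Delta\phi - \langle\nabla(u^2\phi),\nabla\phi\rangle$, and then integrating; this makes the cancellation of the cross terms fully transparent before any integration is performed.
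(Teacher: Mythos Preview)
Your proof is correct and is essentially the same ground-state transform computation as the paper's. The only cosmetic difference is that the paper expands $|\nabla(u\phi)|^2$ directly, writes the cross terms as $\langle\nabla(\phi^2 u),\nabla u\rangle$, and applies Green's formula once, whereas you pass through the Laplacian product rule and integrate by parts twice; the cancellations and the final use of $-\Delta u = Wu$ are identical.
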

\begin{proof}
	We infer from Green's formula
	\begin{multline*}
		\int_\Omega | \nabla u\phi |^2 \dd\mu = \int_\Omega u^2 \abs{\nabla \phi}^2 \dd \mu + \int_\Omega \langle \nabla(\phi^2 u), \nabla u \rangle \dd\mu \\
		= \int_\Omega u^2 \abs{\nabla \phi}^2 \dd \mu -\int_\Omega \phi^2 u (\Delta u) \dd\mu \\ =\int_\Omega u^2 \abs{\nabla \phi}^2 \dd \mu + \int_\Omega W (u\phi)^2 \dd\mu. \qedhere
	\end{multline*}
\end{proof}

The Khas’minski\u{\i} theorem actually provides a characterisation of Agmon ground states, but as before for the Agmon-Allegretto-Piepenbrink theorem, we only need one implication here.

\begin{lemma}[Khas’minski\u{\i} theorem]\label{lem:khasminski}{ Let $K,\Omega\subseteq X$,  $K$ compact, $\Omega$ open, $o\in \mathrm{int}(K)$ and $o\in \Omega$, and $W\colon X\setminus\{o\}\to \RR$.} Assume that $u$ is a smooth positive ($-\Delta-W$)-harmonic function in $X\setminus \set{o}$.
	
	 Assume that  $v_\infty$ is a smooth positive ($-\Delta-W$)-harmonic function on $X\setminus K$ which satisfies
	$$ \lim_{x\to \infty}\frac{u(x )}{v_\infty(x)}=0.$$
	
	Further, let $v_0$ be a smooth positive ($-\Delta-W$)-harmonic function in $\Omega\setminus \{o\}$ which satisfies
	$$ \lim_{x\to o}\frac{u(x )}{v_0(x)}=0.$$
	Then $u$ is the Agmon ground state with respect to $-\Delta-W$ in $X\setminus \set{o}$.
\end{lemma}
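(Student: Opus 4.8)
The plan is to use the ground-state transform of Lemma~\ref{lem:davies} to pass from $-\Delta-W$ on $X\setminus\{o\}$ to a weighted Laplacian, and then to show this weighted operator is critical by verifying that each of its two ends --- the puncture at $o$ and the end at infinity --- is parabolic; the proper superharmonic functions needed for this are precisely what $v_0$ and $v_\infty$ furnish.

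First I would set $Q(\phi):=\int_{X\setminus\{o\}}u^{2}\abs{\nabla\phi}^{2}\dd\mu$. By Lemma~\ref{lem:davies} applied to the positive $(-\Delta-W)$-harmonic function $u$ we have $\EE(u\phi)-\norm{u\phi}_W^{2}=Q(\phi)$ for all $\phi\in C_c^\infty(X\setminus\{o\})$; in particular $-\Delta-W\ge0$ on $X\setminus\{o\}$. Since $\phi\mapsto u\phi$ is a bijection of $C_c^\infty(X\setminus\{o\})$ intertwining the form $\EE(\cdot)-\norm{\cdot}_W^{2}$ with $Q$ and carrying positive $(-\Delta-W)$-(super)harmonic functions to positive (super)harmonic functions of the weighted operator $L:=-u^{-2}\Div(u^{2}\nabla\,\cdot\,)$, criticality of $-\Delta-W$ on $X\setminus\{o\}$ with ground state $u$ is equivalent to criticality of $L$ on $X\setminus\{o\}$ with the constant $1$ as ground state. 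A direct computation gives, for any positive $(-\Delta-W)$-harmonic $v$, the identity $-\Div(u^{2}\nabla(v/u))=v\Delta u-u\Delta v=v(-Wu)-u(-Wv)=0$; hence $h_\infty:=v_\infty/u$ is a positive $L$-harmonic function on $X\setminus K$, $h_0:=v_0/u$ a positive $L$-harmonic function on $\Omega\setminus\{o\}$, and the hypotheses $u/v_\infty\to0$ and $u/v_0\to0$ say exactly that $h_\infty\to\infty$ at infinity and $h_0\to\infty$ as $x\to o$.

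The main step is to build a null sequence for $Q$ converging locally uniformly to $1$: functions $\phi_n\in C_c^\infty(X\setminus\{o\})$ with $0\le\phi_n\le1$, $\phi_n\to1$ locally uniformly and $Q(\phi_n)\to0$. One assembles the $\phi_n$ from cut-offs attached to $h_\infty$ and $h_0$: for large regular values $R$ (Sard), let $\phi_R$ be $1$ on a compact core, interpolate linearly in $h_\infty$ from $1$ down to $0$ across a slab $\{c<h_\infty<R\}$ near infinity, interpolate linearly in $h_0$ from $1$ down to $0$ across a slab $\{c'<h_0<R\}$ around $o$, and vanish beyond these slabs; an exhaustion argument yields $\phi_R\to1$ locally uniformly. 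On the infinity slab $\nabla\phi_R=-(R-c)^{-1}\nabla h_\infty$, and since the slab is relatively compact with boundary along regular level sets and $\Div(u^{2}\nabla h_\infty)=0$ there, integration by parts together with conservation of the flux of the divergence-free field $u^{2}\nabla h_\infty$ gives $\int_{\{c<h_\infty<R\}}u^{2}\abs{\nabla h_\infty}^{2}\dd\mu=O(R)$, so this slab contributes $O(R)/(R-c)^{2}=O(1/R)$ to $Q(\phi_R)$; the slab around $o$ behaves the same way, whence $Q(\phi_R)\to0$. To finish, put $\psi_n:=u\phi_n$; these are Lipschitz with compact support in $X\setminus\{o\}$, hence admissible after mollification, and Lemma~\ref{lem:davies} gives $\EE(\psi_n)-\norm{\psi_n}_W^{2}=Q(\phi_n)\to0$ while $\psi_n\to u$ locally uniformly. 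If some $\widetilde W\gneq W$ admitted the Hardy inequality $\EE(\phi)\ge\norm{\phi}_{\widetilde W}^{2}$ on $X\setminus\{o\}$, then $\EE(\psi_n)-\norm{\psi_n}_W^{2}\ge\norm{\psi_n}_{\widetilde W-W}^{2}$, whose $\liminf$ is, by Fatou's lemma, at least $\int_{X\setminus\{o\}}(\widetilde W-W)u^{2}\dd\mu>0$ --- a contradiction. Hence $-\Delta-W$ is critical on $X\setminus\{o\}$, and as $u$ is a positive $(-\Delta-W)$-harmonic function it is, up to a positive multiple, the Agmon ground state.

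The principal obstacle is the bookkeeping hidden in the cut-off construction: fitting the two transition slabs and the compact core into a single admissible test function, checking that the level sets involved can be chosen regular and the slabs relatively compact so that the divergence theorem applies, and confirming the local-uniform convergence $\phi_R\to1$. Conceptually this is just the two-ended analogue of Khas'minski\u{\i}'s recurrence test, routine in spirit but delicate to write out in full. It is worth noting that the assumption that $v_0$ and $v_\infty$ are harmonic, and not merely superharmonic, is used essentially at this point: it is exactly what forces the slab energies to grow only linearly in $R$, via the flux identity, and hence makes $Q(\phi_R)\to0$.
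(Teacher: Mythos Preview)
Your approach is correct and genuinely different from the one the paper points to. The paper does not prove Lemma~\ref{lem:khasminski} directly; it cites \cite[Proposition~2.3]{FP23} and \cite[Proposition~6.1]{DFP}, whose strategy is to use $v_\infty$ and $v_0$ as comparison functions in a maximum-principle argument to show that $u$ has \emph{minimal growth} both at infinity and at $o$, and then to invoke \cite[Theorem~5.2]{PT08}, which identifies a global positive solution of minimal growth at all ends with the Agmon ground state. Your route bypasses the notion of minimal growth entirely: after the ground-state transform you reduce to the weighted operator $L=-u^{-2}\Div(u^{2}\nabla\,\cdot\,)$ and exhibit a null sequence directly, using $h_\infty=v_\infty/u$ and $h_0=v_0/u$ as Khas'minski\u{\i}-type evasion functions at the two ends. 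The flux/co-area computation you sketch indeed gives the $O(R)$ growth of the slab energy, and the Fatou argument then rules out any strictly larger Hardy weight, which by the remark following the definition of optimality is equivalent to criticality. What your approach buys is a more self-contained argument that does not rely on the machinery of solutions of minimal growth; what the cited approach buys is that the comparison step is short and the heavy lifting is delegated to a general structure theorem. The bookkeeping you flag (regular level sets via Sard, relative compactness of the slabs away from $\partial K$ and from $o$, matching the two transition regions to a common compact core, and passing from Lipschitz to smooth test functions) is real but standard.

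One small correction to your closing remark: harmonicity of $v_\infty$ and $v_0$ is \emph{not} essential for the slab estimate. If $v_\infty$ is merely $(-\Delta-W)$-superharmonic, then $h_\infty=v_\infty/u$ is $L$-superharmonic, so $\Div(u^{2}\nabla h_\infty)\le 0$ and the flux $t\mapsto\int_{\{h_\infty=t\}}u^{2}\abs{\nabla h_\infty}$ is nonincreasing; the co-area formula still gives $\int_{\{c<h_\infty<R\}}u^{2}\abs{\nabla h_\infty}^{2}\le (R-c)\,F(c)=O(R)$, and the rest of your argument goes through unchanged. This is consistent with the classical Khas'minski\u{\i} criterion, which only requires superharmonic comparison functions.
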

For a proof see, e.g.,~\cite[Proposition~2.3]{FP23} or \cite[Proposition~6.1]{DFP}. These statements actually show that $u$ is a global ($-\Delta-W$)-harmonic function {of minimal growth} near infinity and $o$. It is shown in \cite[Theorem~5.2]{PT08} that such a function is the Agmon ground state, see also \cite{Ag83}.

\subsection{Hyperbolic Manifolds}
In order to demonstrate the advertised method, we focus here on the $d$-dimensional real hyperbolic space $\HH^d$. For later use, let us mention that the volume density on $\HH^d$ is radial with respect to any $o\in \HH^d$ and given by $f(r)=\sinh^{d-1}(r) $, where $r=d(x,o)$ for $x\in S_o(r)$.

Recall that $\lambda_0(\HH^d)={(d-1)^2}/{4}$, {cf. e.g., \cite{McK}}.

\begin{theorem}[Poincar\'{e}-Hardy-type inequality on $\HH^d$, {\cite[Theorem~2.1]{BGG17}}]\label{thm:PHhyperbolic}
	Let $d\geq 3$, $o \in \HH^d$, and $r = d(o,\cdot)$. {We have 
	\[ E(\phi)\geq \norm{\phi}^2_{W},\qquad \phi \in C_c^{\infty}(\HH^d),\]	
	where $W$ is an optimal Hardy weight on $\HH^d\setminus \{o\}$ given by
	\[ W(r):= \lambda_0(\HH^d) + \frac{1}{4r^2} + \frac{(d-1)(d-3)}{4\sinh^2(r)}, \qquad r>0.\]}
\end{theorem}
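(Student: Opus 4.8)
The plan is to follow the strategy outlined at the start of Section~\ref{sec:manifolds} with $u(r)=r/f(r)$ and $f(r)=\sinh^{d-1}(r)$, working in geodesic polar coordinates around $o$, where $\Delta$ acts on a radial function $g$ by $\Delta g=g''+(f'/f)g'$. The first task is to identify $W:=-\Delta\sqrt{u}/\sqrt{u}$ explicitly. Using the elementary identity $-\Delta\sqrt{u}/\sqrt{u}=\tfrac12(-\Delta u/u)+\tfrac14(u'/u)^{2}$ for radial functions, together with $u'/u=1/r-(f'/f)$, a short computation gives the density-free formula
\[ W(r)=\frac{1}{4r^{2}}+\frac{1}{2}\left(\frac{f'}{f}\right)'+\frac{1}{4}\left(\frac{f'}{f}\right)^{2},\qquad r>0. \]
Substituting $f'/f=(d-1)\coth r$ and $\coth^{2}r=1+\sinh^{-2}r$ produces exactly the asserted expression for $W$, with limit $\lambda_0(\HH^d)$ as $r\to\infty$ and leading term $(d-2)^{2}/(4r^{2})$ as $r\to o$; since $d\geq3$ each of the three summands of $W$ is nonnegative, so $W$ is a nonnegative, non-trivial weight.

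Next I would establish the Hardy inequality. By construction $\sqrt{u}$ is smooth, positive and $(-\Delta-W)$-harmonic on $\HH^{d}\setminus\set{o}$, so Lemma~\ref{lem:davies} with $\Omega=\HH^{d}\setminus\set{o}$ gives, for every $\phi\in C_{c}^{\infty}(\HH^{d}\setminus\set{o})$,
\[ \EE(\sqrt{u}\,\phi)-\norm{\sqrt{u}\,\phi}_{W}^{2}=\int_{\HH^{d}\setminus\set{o}}u\,\abs{\nabla\phi}^{2}\dd\mu\geq0. \]
Taking $\phi=\psi/\sqrt{u}\in C_{c}^{\infty}(\HH^{d}\setminus\set{o})$ for arbitrary $\psi\in C_{c}^{\infty}(\HH^{d}\setminus\set{o})$ yields $\EE(\psi)\geq\norm{\psi}_{W}^{2}$ on the punctured space. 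To upgrade this to all $\phi\in C_{c}^{\infty}(\HH^{d})$, recall that a single point has vanishing $2$-capacity for $d\geq2$: inserting logarithmic cutoffs that vanish near $o$ and tend to $1$ and passing to the limit, the form side converges since the gradients of the cutoffs have vanishing $L^{2}$-norm, and the weight side converges by dominated convergence because $W(r)\sim(d-2)^{2}/(4r^{2})$ near $o$ is $\mu$-integrable there precisely when $d\geq3$.

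It remains to verify the two optimality conditions. For criticality I would apply Lemma~\ref{lem:khasminski} with the candidate ground state $\sqrt{u}$. Reduction of order for the radial equation $h''+(f'/f)h'+Wh=0$ yields a second, linearly independent solution $\sqrt{u}(r)\int^{r}\tfrac{\dd s}{f(s)u(s)}$, and the decisive point is that $fu=r$, so this equals $\sqrt{u}(r)\int^{r}\tfrac{\dd s}{s}=\sqrt{u}(r)\ln r$ up to constants. Hence $v_\infty:=\sqrt{u}\,\ln r$ is positive and $(-\Delta-W)$-harmonic on $\HH^{d}\setminus\overline{B(o,R_{0})}$ for any $R_{0}>1$, with $\sqrt{u}/v_\infty=1/\ln r\to0$ as $r\to\infty$, while $v_{0}:=\sqrt{u}\,\ln(1/r)$ is positive and $(-\Delta-W)$-harmonic on $B(o,1)\setminus\set{o}$ with $\sqrt{u}/v_{0}=1/\ln(1/r)\to0$ as $r\to o$; Lemma~\ref{lem:khasminski} then identifies $\sqrt{u}$ as the Agmon ground state, so $-\Delta-W$ is critical in $\HH^{d}\setminus\set{o}$. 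For null-criticality I compute, using $fu=r$ once more, $\int_{\HH^{d}\setminus\set{o}}u\,W\dd\mu=c\int_{0}^{\infty}rW(r)\dd r$ for a positive constant $c$; this diverges because $W(r)\to\lambda_0(\HH^d)>0$ as $r\to\infty$ (and also $rW(r)\sim(d-2)^{2}/(4r)$ as $r\to o$), so $\sqrt{u}\notin L^{2}(\HH^{d}\setminus\set{o},W\mu)$ and $W$ is optimal.

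The only genuinely computational step is the evaluation of $W$; the conceptual crux is the elementary but decisive identity $fu=r$, which makes the second radial solution logarithmic and thereby forces $\sqrt{u}$ to have minimal growth at both $o$ and infinity at once, which is exactly what underlies criticality, while the hypothesis $d\geq3$ enters twice: to make $W$ nonnegative and to give the local integrability of $W$ near $o$ used in the extension to $C_{c}^{\infty}(\HH^{d})$.
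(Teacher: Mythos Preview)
Your proof is correct and follows essentially the same route as the paper: the same choice $u=r/f$, the same computation of $W=-\Delta\sqrt{u}/\sqrt{u}$, the same use of Lemmas~\ref{lem:davies} and~\ref{lem:khasminski} with comparison functions $\sqrt{u}\ln r$ and $\sqrt{u}\ln(1/r)$, the same null-criticality check via $\int uW\,f\dd r$, and the same capacity argument to pass from $\HH^d\setminus\{o\}$ to $\HH^d$. Your derivation of the second radial solution via reduction of order---exploiting the identity $fu=r$ so that $\int (fu)^{-1}\dd s=\ln r$---is a pleasant conceptual touch where the paper simply verifies $(-\Delta-W)(\sqrt{u}\ln r)=0$ by direct computation, but the overall argument is identical.
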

\begin{proof} The proof goes along the lines of the proof of \cite[Theorem~2.7]{FP23} by taking the non-compact harmonic manifold to be $\HH^d$, cf.~\cite[Theorem~6.2]{DFP} and \cite[Theorem~2.1]{BGG17}.
	
	{As explained at the beginning of this section, the} fundamental idea of the proof is to take the square root of the radial function 
	\[ u(r):=\frac{r}{f(r)}=\frac{r}{\sinh^{d-1}(r)}, \qquad r>0.\]
	{Note that $u$ is smooth as $\mathbb{H}^d$ is Cartan Hadamard, whence the distance function is smooth.}
	For radial functions $v$, the Laplacian of $\HH^d$ {is given by}
	\[\Delta v(r)= \frac{1}{f(r)}\left(f(r)v'(r) \right)'=v''(r)+ \frac{f'(r)}{f(r)}v'(r)= v''(r)+ (d-1)\coth(r) v'(r),\]
	for all $r> 0$. 
	Hence, an elementary calculation yields
	\[ - \Delta \sqrt{u}(r) = \left(\lambda_0(\HH^d) + \frac{1}{4r^2}+ \frac{(d-1)(d-3)}{4 \sinh^2(r)}  \right)\sqrt{u}(r){=W(r)\sqrt{u}(r)}, \qquad r>0.\] 
	Thus, {$-\Delta\sqrt{u}-W\sqrt{u}= 0$} on $\HH^d\setminus\set{o}$, i.e., $u$ is a smooth positive ($-\Delta-W$)-harmonic function on $\HH^d\setminus\set{o}$. By the Agmon-Allegretto-Piepenbrink theorem, Lemma~\ref{lem:davies}, we already get the desired inequality on $\HH^d\setminus\set{o}$. It remains to show that it is optimal, and that it  also holds on $\HH^d$.
	
	Let us consider optimality first. We start by showing that $-\Delta-W$ is critical on $\HH^d\setminus\set{o}$. Observe that {$r\mapsto\sqrt{u}(r)\cdot \ln(r)$} is another radial ($-\Delta-W$)-harmonic function on $\HH^d\setminus\set{o}$. Indeed, using
	\[(\sqrt{u}(r))'= \left(\frac{1}{r}- (d-1)\cot(r)\right)\frac{\sqrt{u(r)}}{2}, \qquad r>0,\]
	we get
	\begin{align*}
		\Delta (\sqrt{u} \cdot \ln)(r)&= (\sqrt{u} \cdot \ln)''(r)+ (d-1)\coth(r) (\sqrt{u} \cdot \ln)'(r)\\
		&= \ln(r)\Delta \sqrt{u}(r) + \frac{1}{r}\left(2({\sqrt{u}})'(r) - \frac{1}{r}\sqrt{u}(r)+ (d-1)\cot(r)\sqrt{u}(r)\right)\\
		&= \ln(r)\sqrt{u}(r)W(r).
	\end{align*}
This means that $r\mapsto \sqrt{u}(r)\cdot \ln(r)$ is ($-\Delta-W$)-harmonic on $\HH^d\setminus\set{o}$, and positive outside the compact set $B_1(o)$. {Clearly, we have}
$$\lim_{r\to \infty}\frac{\sqrt{u} (r)}{\sqrt{u} (r)\ln(r)}= 0. $$
By taking $K=B_1(o)$ in the Khas’minski\u{\i}-type criterion, Lemma~\ref{lem:khasminski}, we get that $\sqrt{u}$ has minimal growth near infinity.

{Replacing $r\mapsto \sqrt{u}(r)\ln(r)$ by $r\mapsto \sqrt{u}(r)\ln(1/r)$, the same argument shows that the latter function is positive ($-\Delta-W$)-harmonic in $B_1(o)\setminus \set{o}$, whence $u$ has minimal growth near $o$ in $\HH^d\setminus \{o\}$. Therefore,} we can apply Lemma~\ref{lem:khasminski} and get that $\sqrt{u}$ is an Agmon ground state of $-\Delta-W$ on $\HH^d\setminus\set{o}$. This is equivalent to $-\Delta-W$ being critical on $\HH^d\setminus\set{o}$.

Let us show null-criticality. 
Since $u(r)=r/f(r)$, and $W(r)\geq  \frac{1}{4r^2}$, we have
\[\int_{X\setminus \{o\}}\sqrt{u}^2W \dd\mu \geq \omega_d \int_0^\infty (\sqrt{r/f(r)})^2 f(r) \frac{dr}{(2r)^2} = \infty,\]
and conclude that $-\Delta -W$ is null-critical with respect to $W$, and thus $W$ is an optimal Hardy weight of $-\Delta$ in $\HH^d\setminus \{o\}$.

That we can enlarge the Hardy inequality from $\HH^d\setminus \set{o}$ to $\HH^d$ is mainly because of $d\geq 3$. We leave out the details of this standard technique and explain only the idea of the proof: Since $d\geq 3$, the capacity of $\{o\}$ vanishes, i.e., there is a sequence $(\phi_j)$ in $C_0^{\infty}(\HH^d)$ such that $0\leq \phi_j \leq 1$, $\phi_j=1$ in a neighbourhood of $\{o\}$ and $\phi_j\to 0$ in $D_0(\HH^d)=\overline{C_c^{\infty}(\HH^d)}^{\vert \cdot \vert_0}$ where $\vert \cdot \vert_0^2:=E_{0}(\phi)+\vert \cdot \vert^2$ denotes the form norm. In particular, $E_{0}(\phi_j)\to 0$. We set $\psi_j:=(1-\phi_j)\psi$, where $\psi$ is an arbitrary function in $C_c^{\infty}(\HH^d)$. Now it follows $\psi_j\to \psi$ in $D_0(\HH^d\setminus \{o\})$ similarly as in \cite[Appendix A]{DAD14}). Note that the last step there is just an application of \cite[Theorem~4.3]{PT08} or \cite[Theorem~3]{T00}.
\end{proof}

\begin{remark}
	As was remarked in \cite{BGG17}, it is possible to apply the general optimality theory to the shifted non-negative operator $-\Delta - \lambda$ for some $\lambda \leq \lambda_0(\HH^d)$ by taking the Fitzsimmons ratio of the square root of the corresponding Green's function. {Since the heat kernels to this shifted operator are known explicitly, see e.g.~\cite{DM88, GN98}, it is possible to calculate the corresponding Green's function. The Fitzsimmons ratio of the square root of the Green's function gives a Hardy inequality which results in a Poincaré-Hardy-type inequality. It seems to be much harder to derive very explicit optimal weights than with the method presented in Theorem~\ref{thm:PHhyperbolic}.}
	
	{On the other hand, the more direct approaches of showing a Poincaré inequality for the Schrödinger operator $-\Delta - (4r^2)^{-1}$, or even showing the non-negativity of $-\Delta -\lambda_0(\HH^d)- (4r^2)^{-1}$, seem to be much more difficult than the approach via the choice $u=r/f$, and do not yield optimal inequalities.}
\end{remark}

\subsection{Models and Harmonic Manifolds}
The basic idea from the previous section generalises to larger classes of manifolds. 

\subsubsection{Model Manifolds}
We follow \cite[Section~4]{BGG17} again. 

Recall that a smooth $d$-dimensional Riemannian manifold $(X,g)$ is called a \emph{Riemannian model} with pole $o\in X$ if it can be covered by one chart and the metric can be given in spherical coordinates with respect to $o$ by
\[ \dd s^2 = \dd r^2 + h^2(r)\dd \omega^2, \]
where $\dd \omega^2$ is the metric on the sphere $\SSS^{d-1} $ and $h$ is a smooth non-negative function on $[0,\infty)$ which is strictly positive on $(0,\infty)$ and satisfies $h(0)=h''(0)=0$ and $h'(0)=1$. These conditions ensure that the distance function is smooth. {The volume density is then given by $f=h^{d-1}$.}

For radial and smooth functions $v$, the Laplacian satisfies
\[ \Delta v (r)= v''(r)+ (d-1)\frac{h'(r)}{h(r)}v'(r), \qquad r>0.\]

With this in mind, we generalise Theorem~\ref{thm:PHhyperbolic} as follows:

\begin{theorem}[Hardy-type inequality on models, {\cite[Theorem~2.5]{BGG17}}]\label{thm:models}
	Let $d\geq 3$. Let $o\in X$ be a pole of the $d$-dimensional Riemannian model $(X,g)$ corresponding to the volume density $f=h^{d-1}$. {We have the Hardy inequality
		\[ E(\phi)\geq \norm{\phi}^2_{W},\qquad \phi \in C_c^{\infty}(X),\]	
		where $W$ is given by
		\[ W(r):= \frac{1}{4r^2} + \frac{(d-1)}{4}\left( 2 \frac{h''}{h}+ (d-3)\frac{(h')^2-1}{h^2}\right)+ \frac{(d-1)(d-3)}{4}\frac{1}{h^2}, \qquad r>0.\]
	Moreover, $-\Delta - W$ is critical in $X\setminus \{ o\}$, and if $2 h h'' \geq  - (d-3)(h')^2$ in $X\setminus\set{o}$, then $W$ is optimal in $X\setminus\set{o}$.}
\end{theorem}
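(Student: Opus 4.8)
The plan is to mimic the proof of Theorem~\ref{thm:PHhyperbolic} line by line, replacing $\sinh$ by the general warping function $h$. First I would set $u(r):=r/f(r)=r/h^{d-1}(r)$ and compute $-\Delta\sqrt{u}/\sqrt{u}$ using the radial Laplacian formula $\Delta v(r)=v''(r)+(d-1)(h'/h)(r)v'(r)$. A direct differentiation (using $(\sqrt u)'/\sqrt u=\tfrac12(1/r-(d-1)h'/h)$ and then differentiating once more) should produce exactly the claimed $W(r)$: the term $1/(4r^2)$ comes from the $1/r$ piece, the term $\tfrac{(d-1)}{4}(2h''/h+(d-3)((h')^2)/h^2)$ from the $h'/h$ pieces, and one isolates the extra $-\tfrac{(d-1)}{4}(d-3)/h^2$ and $+\tfrac{(d-1)(d-3)}{4}/h^2$ contributions by hand; since $h(0)=h''(0)=0$, $h'(0)=1$ the distance function is smooth, so $u$ and hence $\sqrt u$ are smooth on $X\setminus\{o\}$. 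This shows $\sqrt u$ is a positive $(-\Delta-W)$-harmonic function on $X\setminus\{o\}$, and Lemma~\ref{lem:davies} immediately gives the Hardy inequality on $X\setminus\{o\}$; the passage to all of $C_c^\infty(X)$ uses $d\geq 3$ and the vanishing-capacity argument sketched in Theorem~\ref{thm:PHhyperbolic}, which I would again only indicate.

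For criticality of $-\Delta-W$ on $X\setminus\{o\}$, I would exhibit $r\mapsto\sqrt u(r)\ln r$ as a second radial $(-\Delta-W)$-harmonic function: the same computation as in Theorem~\ref{thm:PHhyperbolic} works verbatim because the cancellation there only used that $\sqrt u$ solves $(-\Delta-W)\sqrt u=0$ and the product rule, not the specific form of $h$. Since $\sqrt u(r)\ln r>0$ outside $B_1(o)$ and $\sqrt u/(\sqrt u\ln r)\to 0$ as $r\to\infty$ (here I would note that $f(r)\to\infty$, so $u\to 0$, but actually only the ratio matters and it is $1/\ln r\to 0$ regardless), Lemma~\ref{lem:khasminski} with $K=\overline{B_1(o)}$ shows $\sqrt u$ has minimal growth at infinity. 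Near $o$, replacing $\ln r$ by $\ln(1/r)$ gives a positive $(-\Delta-W)$-harmonic function on $B_1(o)\setminus\{o\}$ with $\sqrt u/(\sqrt u\ln(1/r))\to 0$ as $r\to o$, so $\sqrt u$ has minimal growth at $o$ as well. Lemma~\ref{lem:khasminski} then identifies $\sqrt u$ as the Agmon ground state, i.e.\ $-\Delta-W$ is critical on $X\setminus\{o\}$. This part of the argument is robust and needs no extra hypothesis on $h$.

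The subtlety — and the reason the optimality (i.e.\ null-criticality) conclusion carries the extra assumption $2hh''\geq -(d-3)(h')^2$ on $X\setminus\{o\}$ — is that \emph{$W$ must be nonnegative} for the statement to even make sense as a Hardy weight in the sense of the Definition, and more importantly one needs a lower bound on $W$ that forces $\int_{X\setminus\{o\}}\sqrt u^2\,W\,\dd\mu=\infty$. Under $2hh''\geq -(d-3)(h')^2$ the middle bracket in $W$ is $\geq -\tfrac{(d-1)(d-3)}{4}/h^2$, which exactly cancels the last term, leaving $W(r)\geq 1/(4r^2)$; then, exactly as in Theorem~\ref{thm:PHhyperbolic},
\[
\int_{X\setminus\{o\}}(\sqrt u)^2\,W\,\dd\mu\;\geq\;\omega_d\int_0^\infty \frac{r}{f(r)}\cdot f(r)\cdot\frac{\dd r}{4r^2}\;=\;\frac{\omega_d}{4}\int_0^\infty\frac{\dd r}{r}\;=\;\infty,
\]
so $\sqrt u\notin L^2(X\setminus\{o\},W\mu)$, which is null-criticality; combined with criticality this is precisely optimality of $W$. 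I expect the main obstacle to be purely computational: carrying out the differentiation of $\sqrt u$ cleanly enough that the three groups of terms in $W$ emerge transparently, and checking that the curvature-type hypothesis $2hh''\geq-(d-3)(h')^2$ is exactly what is needed for $W\geq 1/(4r^2)$ (and in particular $W\geq 0$) — no new analytic input beyond Lemmas~\ref{lem:davies} and~\ref{lem:khasminski} is required.
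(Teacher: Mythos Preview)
Your proposal is correct and follows precisely the approach the paper intends: the paper's own proof is a one-line reference back to the hyperbolic case, instructing the reader to set $u(r)=r/f(r)$ and redo the calculations to see that $\sqrt{u}$ is the Agmon ground state, with optimality following by the same arguments. You have faithfully reconstructed all of this, including the correct identification of why the curvature-type hypothesis $2hh''\geq -(d-3)(h')^2$ is exactly what yields $W(r)\geq 1/(4r^2)$ and hence null-criticality via the divergent $\int_0^\infty dr/r$ integral.
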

The proof follows along the lines of Theorem~\ref{thm:PHhyperbolic}. Simply set $u (r)=r/f(r)$, and redo the calculations, i.e., show that $\sqrt{u}$ is the Agmon ground state. The optimality has not been observed in \cite[Theorem~2.5]{BGG17} but this follows from the same arguments as for the hyperbolic space. 

We note the following three interesting geometrical interpretations appear in the Laplacian and in the Hardy weight:
\begin{itemize}
	\item $(d-1)\frac{h'(r)}{h(r)}$ is the mean curvature of the geodesic sphere of radius $r$ in the radial direction;
	\item $- \frac{h''(r)}{h(r)}$ is the sectional curvature with respect to planes containing the radial direction; and 
	\item $- \frac{(h'(r))^2-1}{h^2(r)}$ is the sectional curvature with respect to planes orthogonal to the radial direction.
\end{itemize}

\begin{remark}
	The {drawback} of Theorem~\ref{thm:models} is that we do not have a Poincaré term in general. This leads to the non-linear problem of finding models for which the volume density $f=h^{d-1}$ satisfies
	\[ \frac{(d-1)}{4}\left( 2 \frac{h''}{h}+ (d-3)\frac{(h')^2-1}{h^2}\right) = \lambda + \tilde{w},\]
	for some constant $\lambda > 0 $ and function $\tilde{w} \geq 0$. The ideal value for $\lambda$ would be of course the bottom of the spectrum.
	
	This also motivates to look for manifolds beyond models which include as a special case $\HH^d$. A famous class of such manifolds are harmonic manifolds which we will study next. It comes as a surprise that these harmonic manifolds bear a rich group of manifolds with the desired optimal Poincaré-Hardy-type inequalities.
\end{remark}

\subsubsection{Non-Compact Harmonic Manifolds}

Apart taking the right function $u$, the proof of Theorem~\ref{thm:PHhyperbolic} uses strongly the simplified structure of the Laplacian for radial functions. Besides model manifolds, non-compact harmonic manifolds give a similar expression. 

A complete Riemannian manifold $(X,g)$ is \emph{harmonic} if, around any point of $X$, there exists a local non-constant radial harmonic function. Another equivalent definition of harmonicity is that every harmonic function satisfies the mean value property, see e.g.~\cite[Théorème~4]{R03}.

Examples of non-compact harmonic manifolds are the Euclidean spaces, the real hyperbolic spaces and Damek-Ricci spaces, and it is a challenging open problem whether there exist any further non-compact harmonic manifold. Note that these manifolds are in general non-symmetric as there is a $7$-dimensional non-symmetric Damek-Ricci space. Moreover, harmonic manifolds are Cartan-Hadamard manifolds.

We note that for a non-compact harmonic manifold $(X,g)$ with a pole $o \in X$, the Laplacian $\Delta$ of a smooth radial function $v$ is again radial and given by
\begin{equation*} 
	\Delta v(r) = v''(r) + \frac{f'(r)}{f(r)} v'(r),
\end{equation*}
where $f(r)$ is the volume density of the harmonic manifold. However, a non-compact harmonic manifold is a model only if it is $\RR^d$ or $\HH^d$. Models have spherical symmetry of the metric whereas harmonic manifolds only have spherical volume homogeneity.

{We assume in the sequel that the dimension of non-compact harmonic manifolds is always larger or equal to three.}

\begin{theorem}[Hardy-type inequality on harmonic manifolds]\label{thm:HardyHarmonic}
	Let $(X,g)$ be a non-compact harmonic manifold with volume density $f=f(r)$, $o \in X$ a pole, and $r = d(o,\cdot)$. 
	{We have the Hardy inequality
		\[ E(\phi)\geq \norm{\phi}^2_{W},\qquad \phi \in C_c^{\infty}(X),\]	
		where $W$ is given by
		\[ W(r):= \frac{1}{4r^2} + \frac{1}{4}\left( 2 \frac{f''}{f}- \frac{(f')^2}{f^2}\right), \qquad r>0.\]
		}
	Moreover, $-\Delta - W$ is critical in $X\setminus \{ o\}$, and if $2 f f'' \geq  (f')^2$ in $X\setminus\set{o}$, then $W$ is optimal in $X\setminus\set{o}$.
\end{theorem}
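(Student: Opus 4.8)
The strategy is to repeat the proof of Theorem~\ref{thm:PHhyperbolic} essentially verbatim, with $\sinh^{d-1}(r)$ replaced by the general volume density $f$; the only structural input used is that on a non-compact harmonic manifold the radial Laplacian again has the model-type form $\Delta v(r) = v''(r) + (f'(r)/f(r))v'(r)$, even though such a manifold need not be a model. Since a non-compact harmonic manifold is Cartan--Hadamard, $r = d(o,\cdot)$ is smooth on $X\setminus\{o\}$, so
\[ u(r) := \frac{r}{f(r)}, \qquad r>0, \]
defines a smooth positive radial function there. First I would compute, using $(\sqrt u)'/\sqrt u = \tfrac12(1/r - f'/f)$, that
\[ -\Delta\sqrt{u}(r) = \left( \frac{1}{4r^2} + \frac14\left(2\frac{f''(r)}{f(r)} - \frac{(f'(r))^2}{f(r)^2}\right)\right)\sqrt{u}(r) = W(r)\sqrt{u}(r), \qquad r>0, \]
the point being that the cross terms in $f'/f$ cancel. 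Thus $\sqrt u$ is a positive $(-\Delta - W)$-harmonic function on $X\setminus\{o\}$, and the Agmon--Allegretto--Piepenbrink identity (Lemma~\ref{lem:davies}) gives $E(\phi)\geq\norm{\phi}_W^2$ for all $\phi\in C_c^\infty(X\setminus\{o\})$.

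Next I would prove that $-\Delta - W$ is critical on $X\setminus\{o\}$ via the Khas'minski\u{\i} criterion (Lemma~\ref{lem:khasminski}). As in the hyperbolic case, $r\mapsto\sqrt u(r)\ln r$ is a second radial $(-\Delta - W)$-harmonic function: substituting it into the radial Laplacian and using $(\sqrt u)' = \tfrac12(1/r - f'/f)\sqrt u$, every term not proportional to $\ln r$ cancels, so $\Delta(\sqrt u\,\ln r) = \ln r\cdot\Delta\sqrt u = -W(\sqrt u\,\ln r)$. This function is positive on $X\setminus\overline{B_1(o)}$ with $\sqrt u(r)/(\sqrt u(r)\ln r)\to 0$ as $r\to\infty$; replacing $\ln r$ by $\ln(1/r)$ gives a positive $(-\Delta - W)$-harmonic function on $B_1(o)\setminus\{o\}$ with $\sqrt u(r)/(\sqrt u(r)\ln(1/r))\to 0$ as $r\to 0$. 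Taking $K = \overline{B_1(o)}$ in Lemma~\ref{lem:khasminski}, $\sqrt u$ is the Agmon ground state, i.e.\ $-\Delta - W$ is critical on $X\setminus\{o\}$.

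For optimality it remains to verify null-criticality under the hypothesis $2ff''\geq(f')^2$ on $X\setminus\{o\}$. That hypothesis is exactly what makes $2f''/f - (f')^2/f^2\geq 0$, hence $W(r)\geq 1/(4r^2)$, so, writing the radial part of $\mu$ as $\omega_d f(r)\dd r$,
\[ \int_{X\setminus\{o\}} (\sqrt u)^2\,W\dd\mu \;\geq\; \omega_d\int_0^\infty\frac{r}{f(r)}\cdot\frac{1}{4r^2}\cdot f(r)\dd r \;=\; \frac{\omega_d}{4}\int_0^\infty\frac{\dd r}{r} \;=\; \infty; \]
thus $\sqrt u\notin L^2(X\setminus\{o\},W\mu)$, so $-\Delta - W$ is null-critical with respect to $W$ and, together with criticality, $W$ is an optimal Hardy weight of $-\Delta$ on $X\setminus\{o\}$. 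Finally, since $d\geq 3$ and $f(r)\sim r^{d-1}$ as $r\to 0$, the capacity of $\{o\}$ vanishes, and one enlarges the inequality from $C_c^\infty(X\setminus\{o\})$ to $C_c^\infty(X)$ by the same cutoff argument as in the proof of Theorem~\ref{thm:PHhyperbolic}, approximating $\psi\in C_c^\infty(X)$ by $(1-\phi_j)\psi$ and invoking \cite[Theorem~4.3]{PT08}.

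I do not expect a genuine obstacle: the whole argument runs parallel to the hyperbolic and model cases, the essential observation being the model-type form of the radial Laplacian on harmonic manifolds. The only points requiring slight care are the two algebraic identities (that $-\Delta\sqrt u = W\sqrt u$ and that $\sqrt u\,\ln r$ is $(-\Delta - W)$-harmonic) and the near-origin asymptotics $f(r)\sim r^{d-1}$ needed for the capacity step; the condition $2ff''\geq(f')^2$ enters only to guarantee $W\geq 1/(4r^2)$ and hence null-criticality.
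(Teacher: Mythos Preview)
Your proposal is correct and follows exactly the approach the paper indicates: it reproduces the proof of Theorem~\ref{thm:PHhyperbolic} with $f$ in place of $\sinh^{d-1}$, using $u=r/f$, verifying that $\sqrt u$ and $\sqrt u\,\ln r$ are $(-\Delta-W)$-harmonic via the radial Laplacian $\Delta v=v''+(f'/f)v'$, applying Lemmas~\ref{lem:davies} and~\ref{lem:khasminski}, and deducing null-criticality from $W\geq 1/(4r^2)$ under the hypothesis $2ff''\geq (f')^2$. The paper itself only sketches this and refers to \cite{FP23} for details, so your write-up is in fact more explicit than what appears here.
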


The proof of Theorem~\ref{thm:HardyHarmonic} is similar to the one of Theorem~\ref{thm:PHhyperbolic} and can be deduced by considering $u=r/f$ where $f$ is the volume density, and showing that $\sqrt{u}$ is the Agmon ground state. Detailed are given in \cite{FP23}.

As in the case of general models, also for harmonic manifolds the Poincaré term is not visible but somehow magically it appears in the case of Damek-Ricci spaces $X^{p,q}$. 

Damek-Ricci spaces $X^{p,q}$ are solvable extensions $N A$ of 2-step nilpotent groups $N$ of Heisenberg-type by a one-dimensional abelian group $A$ with left-invariant metrics. They are associated with a pair of parameters $(p, q)$ which are the dimensions of particular subspaces of the underlying nilpotent Lie algebra of $N$, for more details see \cite{R03}. In particular, $\lambda_0(X^{p,q}) = {(p+2q)^2}/{16}$, \cite[Remark~2.2]{PS10}, and  the volume density is given by
\[f(r) = 2^{p+q}\sinh^{p+q}(r/2)\cosh^q(r/2),\]
i.e., in spherical coordinates $(r, \theta)$, the measure can be written as \[ \dd \mu = f(r)\dd r \dd \sigma (\theta),\] where $\dd \sigma$ is the measure of the unit sphere in $\RR^{p+q+1}$, cf. \cite[Thèoréme~10]{R03}.

\begin{corollary}[Poincaré-Hardy-type inequality on $X^{p,q}$, {\cite[Theorem~2.2]{FP23}}]
	Let $X^{p,q}$ be a Damek-Ricci space of dimension $d=p+q+1 \geq 4$, $o \in X$ a pole, and $r = d(o,\cdot)$. 
	We have
		\[ E(\phi)\geq \norm{\phi}^2_{W},\qquad \phi \in C_c^{\infty}(X),\]	
		where $W$ is an optimal Hardy weight on $X\setminus \set{o}$ given by
		\[ W(r):= 	\lambda_0(X^{p,q})+ \frac{1}{4r^2}+ \frac{p(p+2q-2)}{16}\frac{1}{\sinh^2(r/2)}+\frac{q(q-2)}{4}\frac{1}{\sinh^2 (r)}, \qquad r>0.\]
\end{corollary}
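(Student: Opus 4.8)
The plan is to read the corollary off from Theorem~\ref{thm:HardyHarmonic}. A Damek-Ricci space $X^{p,q}$ is a non-compact harmonic manifold of dimension $d=p+q+1\ge 4$ with pole $o$ and radial volume density $f(r)=2^{p+q}\sinh^{p+q}(r/2)\cosh^q(r/2)$, so Theorem~\ref{thm:HardyHarmonic} immediately gives the Hardy inequality with weight $W(r)=\tfrac{1}{4r^2}+\tfrac14\bigl(2f''/f-(f')^2/f^2\bigr)$ together with the criticality of $-\Delta-W$ on $X^{p,q}\setminus\set{o}$. It thus remains to (a) bring this $W$ into the closed form claimed above, and (b) check the optimality hypothesis $2ff''\ge(f')^2$ of Theorem~\ref{thm:HardyHarmonic} on $X^{p,q}\setminus\set{o}$.

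For step (a) I would note that only the logarithmic derivative of $f$ enters, so the prefactor $2^{p+q}$ is irrelevant, and with $L:=\log f$ one has $2f''/f-(f')^2/f^2=2L''+(L')^2$. From $L(r)=\mathrm{const}+(p+q)\log\bigl(\sinh(r/2)\bigr)+q\log\bigl(\cosh(r/2)\bigr)$ I would compute $L'(r)=\tfrac{p+q}{2}\coth(r/2)+\tfrac q2\tanh(r/2)$ and $L''(r)=-\tfrac{p+q}{4}\sinh^{-2}(r/2)+\tfrac q4\cosh^{-2}(r/2)$, square $L'$, and simplify using $\coth(r/2)\tanh(r/2)=1$, $\coth^2(r/2)=1+\sinh^{-2}(r/2)$ and $\tanh^2(r/2)=1-\cosh^{-2}(r/2)$. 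The constant part of $(L')^2$ then collapses to $\tfrac14\bigl((p+q)+q\bigr)^2=\tfrac14(p+2q)^2=4\lambda_0(X^{p,q})$, and collecting singular terms yields $\tfrac14\bigl(2L''+(L')^2\bigr)=\lambda_0(X^{p,q})+\tfrac{(p+q)(p+q-2)}{16}\sinh^{-2}(r/2)-\tfrac{q(q-2)}{16}\cosh^{-2}(r/2)$. Finally, the identity $\sinh^{-2}(r/2)-\cosh^{-2}(r/2)=\sinh^{-2}(r/2)\cosh^{-2}(r/2)=4\sinh^{-2}(r)$ rewrites the $\cosh^{-2}(r/2)$-term as a combination of $\sinh^{-2}(r/2)$ and $\sinh^{-2}(r)$; since $(p+q)(p+q-2)-q(q-2)=p(p+2q-2)$, the coefficient of $\sinh^{-2}(r/2)$ becomes $\tfrac{p(p+2q-2)}{16}$ and that of $\sinh^{-2}(r)$ becomes $\tfrac{q(q-2)}{4}$, which, together with the $\tfrac{1}{4r^2}$ from $W$, is exactly the stated weight.

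For step (b), $2ff''\ge(f')^2$ is equivalent to $2L''+(L')^2\ge0$, that is, to $\lambda_0(X^{p,q})+\tfrac{p(p+2q-2)}{16}\sinh^{-2}(r/2)+\tfrac{q(q-2)}{4}\sinh^{-2}(r)\ge0$. When $q\ne1$ every term is nonnegative; when $q=1$, the constraint $d\ge4$ forces $p\ge2$, and bounding $\sinh^{-2}(r)\le\tfrac14\sinh^{-2}(r/2)$ shows the two singular terms together are at least $\tfrac{p^2-1}{16}\sinh^{-2}(r/2)\ge0$. Hence Theorem~\ref{thm:HardyHarmonic} applies and $W$ is optimal on $X^{p,q}\setminus\set{o}$; null-criticality is in any case immediate from $W\ge\tfrac{1}{4r^2}$ and $u=r/f$ exactly as in the proof of Theorem~\ref{thm:PHhyperbolic}. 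The only genuine obstacle I foresee is the hyperbolic-identity bookkeeping needed to match the harmonic-manifold weight of Theorem~\ref{thm:HardyHarmonic} to the Poincaré-Hardy form above, together with the sign check in the borderline case $q=1$, where $q(q-2)<0$.
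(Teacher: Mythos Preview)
Your proposal is correct and follows exactly the same approach as the paper: apply Theorem~\ref{thm:HardyHarmonic} with the Damek--Ricci volume density $f(r)=2^{p+q}\sinh^{p+q}(r/2)\cosh^q(r/2)$ and simplify. The paper's proof is deliberately terse, deferring the calculations and the optimality check to \cite{FP23}, whereas you have supplied those details explicitly---the logarithmic-derivative trick $2f''/f-(f')^2/f^2=2L''+(L')^2$, the hyperbolic identities, and the verification of $2ff''\ge (f')^2$ including the case $q=1$. Your computations are all correct.
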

\begin{proof}
	On $X^{p,q}$, the volume density is given by
	\[f(r) = 2^{p+q}\sinh^{p+q}(r/2)\cosh^q(r/2).\]
	Hence, via elementary calculation the inequality in Theorem~\ref{thm:HardyHarmonic} becomes a Poincaré-Hardy-type inequality with optimal weight as stated above,	see \cite{FP23} for details.
\end{proof}

\begin{remark}[Other variants] 
		 It is natural to vary the function $u(r)=r/f(r)$ to find corresponding families of critical Schrödinger operators, see~\cite[Theorem~3.1]{FP23} for non-compact harmonic manifolds or \cite[Theorem~2.1]{BGGP20} for $\HH^d$. It can be seen from these results that e.g.~changing the power of $u$ has a direct influence on the constant of the classical Hardy weight.
		
		 Weighted Hardy-type inequalities for hyperbolic spaces and related manifolds influenced by the superharmonic function method above can be found in \cite{BGGP20, FP23}. 
		
		 A quasi-linear generalisation in terms of the $p$-Laplacian, $p\geq 2$, has been obtained in \cite{BDAGG17, FP23}.
\end{remark}

\section{Graphs}

{In this section, we transfer the method presented in Section~\ref{sec:manifolds} to the setting of discrete graphs and derive analogous Hardy-type and Poincar{\'e}-Hardy-type inequalities. The strategy is fruitful once we find reasonable replacements for the following two main ingredients from the manifold setting: Firstly, on both harmonic and model manifolds, the Laplacian of certain radial functions is again radial. Secondly, we need a well-suited radial function $u$. }

{Section~\ref{sec:manifolds} proved that in the setting of harmonic or model manifolds, a good choice for the function $u$ was $u=d(\cdot,o)/f$, where $d$ and $f$ denote the Riemannian distance and volume density, respectively. In the present setting, there is no canonical choice neither for one nor the other. Thus, we need to find a good class of graphs where we can find analogous quantities.}

{If we choose the combinatorial distance there is the rich class of weakly spherically symmetric graphs for which the Laplacian of a radial function is again radial. Examples include homogeneous regular trees and anti-trees, whereas e.g.~$\ZZ^d$, $d\geq 2$, is not included. This class will be considered in the following.}

{Once the combinatorial distance is chosen, the volume density function $f$ has at least three reasonable possibilities:} the inner curvature $k_-$, the outer curvature $k_+$ and the boundary curvature $k$, see the preceding subsection for definitions. {These functions do not increase fast enough in general to obtain optimality. Instead, we choose  $k_-$ integrated over the distance sphere as a replacement for the density, and call this function the area function -- denoted by $\area$.} Note that in the case of manifolds, the analogue choice would lead to the same results. Let us also remark that our interpretation is just one, and others might lead to other interesting Hardy inequalities.

This section is organised as follows: Since the notation on graphs is not standard, we provide them in the next subsection. Then we turn to discrete optimality theory {and state with Lemma~\ref{lem:GST} the main tool to obtain criticality of a Hardy weight}. Thereafter, we show a detailed construction of optimal Poincaré-Hardy weights on homogeneous regular trees. These trees are often considered to be a discrete counterpart of hyperbolic spaces. We end the discussion by showing an extension to weakly spherically symmetric graphs and show that the resulting optimal weights are larger at infinity than the optimal weights constructed via the Fitzsimmons ratio of the square of the Green's function.

\subsection{Setting the Scene}
Let a {countable} set $X$ be equipped with the discrete topology and a symmetric function  $b\colon X\times X \to [0,\infty)$ with zero diagonal be given such that $ b $ is locally summable, i.e.,  \[ \sum_{y\in X}b(x,y)<\infty, \qquad x\in X.\]  
We refer to $ b $ as a \emph{graph} over $X$ and elements of $X$ are called \emph{vertices}. 

Two vertices $x, y$ are called \emph{connected} (or neighbours) with respect to the graph $b$ if $b(x,y)>0$. A subset $\Omega\sse X$ is called \emph{connected} with respect to $b$, if for every two vertices $x,y\in \Omega$ there are vertices ${x_0,\ldots ,x_n \in \Omega}$, such that $x=x_0$, $y=x_n$ and $b(x_{i-1}, x_i)> 0$ for all $i\in\set{1,\ldots, n-1}$. A graph is \emph{locally finite} if every vertex has only finitely many neighbours. We will always assume that 
\begin{center}$X$ is connected with respect to the locally finite graph $b$.\end{center}

The set of real-valued functions on $X$ is denoted by $C(X)$. {If $\Omega \sse X$, the set of finitely supported functions which vanish outside of $\Omega$ is denoted by $C_c(\Omega)$.} A strictly positive function $m\in C(X)$ extends to a measure with full support via ${m(\Omega)= \sum_{x\in \Omega}m(x)}$ for $\Omega\sse X$. To emphasise the measure on the vertices, we will sometimes also speak of a graph $b$ over the discrete measure space $(X,m)$.

{For some $\Omega \sse X$, let $\ell^2(\Omega,m)$ denote the Hilbert space of square $m$-summable functions on $\Omega$ with inner product
\[ \ip{f}{g}:= \sum_{x\in \Omega}f(x)g(x)m(x), \qquad f,g\in \ell^2(\Omega,m), \]
and induced norm $\norm{f}:= \sqrt{\ip{f}{f}}$, $f\in \ell^2(\Omega,m)$.}

{Moreover, we will also consider $\ell^1(\Omega,m)$ for some $\Omega \sse X$, which is the set of functions $f\in C_c(\Omega)$ such that 
\[\norm{f}_{1}:= \sum_{x\in \Omega}\abs{f(x)}m(x)< \infty.\]}

\subsubsection{Outer, Inner and Boundary Curvatures, and the Area Function}

We denote by $d:=d_{\mathrm{comb}}\colon X\times X \to \NN_0$ the combinatorial distance, i.e., for two vertices $x,y\in X$, {the value} $d(x,y)$ is the least number of edges of a path connecting $x$ and $y$. 

In the following we introduce notation with respect to a fixed set $O\sse X$. For all $x\in X$, we let
\[\abs{x}:= \abs{x}_O:=\min_{o\in O}d(x,o).\]
Furthermore, we define the distance ball and sphere with respect to $O$ for all $r\in \NN_0$ via
\[B(r):=B_O(r):=\set{x\in X : \abs{x}\leq r}, \quad S(r):=S_O(r):=\set{x\in X : \abs{x}=r}. \]
If $O$ is just a singleton, we usually do not write parentheses.

Recall that $X$ is connected, hence $\{B(r)\}_r$ is an exhaustion of $X$, and since the graph is locally finite and $d$ is combinatorial, $S(r)$ is a finite set for all $r\in \NN$.

Let define the \emph{outer and inner curvatures (or degrees)} with respect to $O$ via
\begin{align*}
	k_+(x)&:=k_{+,O}(x):= \frac{1}{m(x)}\sum_{y\in S(r+ 1)}b(x,y), \\
	k_{-}(x)&:=k_{-,O}(x):= \frac{1}{m(x)}\sum_{y\in S(r-1)}b(x,y),
\end{align*}
for all $ x\in S(r)$ and $r\in\NN_0$.
Moreover, the \emph{boundary curvature (or degree)} with respect to $O$ is given by 
\[k(x):=k_O(x):=\frac{1}{m(x)}\sum_{y\in S(r)}b(x,y),\]
for all $x\in S(r)$ and $r\in\NN_0$.
{Note that it might occur that we sum over the empty set. In this case the value of the curvatures is zero.}

The \emph{curvature ratio function} is given by
\[\kappa (x):=\kappa_O (x):= \frac{k_{+}(x)}{k_{-}(x)}, \quad x\in X.\]

We define the \emph{area function} {$\area:=\area_O\colon \NN_0 \to \RR$} with respect to $O\sse X$ as the total edge weight between the spheres $S(r-1)$ and $S(r)$, i.e.,
\[ \area(r)
:= \sum_{\substack{z\in S(r-1), \\ y \in S(r)}}b(z,y)= \sum_{z\in S(r-1)} k_{+}(z)m(z)=\sum_{y\in S(r)} k_{-}(y)m(y).\]
for all $r\in\NN_0$. Local finiteness of our graphs yields finiteness of  the area function.

{We call $\vol:=\vol_O\colon \NN_0 \to [0,\infty)$
\[ \vol(r):=m( S(r))= \sum_{x\in S(r)} m(x),\]
the \emph{volume function} with respect to $O\sse X$. The area function transfers knowledge of the edges and we think of it as an integrated version of the volume density, whereas the volume function is connected to the vertices.}

A function $u\in C(X)$ is called \emph{spherically symmetric} with respect to $O\sse X$, if there exists a function $\tilde{u}\colon \NN_0\to \RR$ such that $u(x)=\tilde{u}(r)$ for all $x\in S_O(r)$ and $r\in \NN_0$. To simplify, we will just identify  $u=\tilde{u}$ in this case. 

On any graph, $\area, \vol$ and $\abs{\cdot}$ are spherically symmetric functions with respect to {any} $O\sse X$.

\subsubsection{Laplacians and Quadratic Forms}

On {the} graph $b$ over $(X,m)$, we define the formal graph Laplacian ${-\Delta \colon C(X) \to C(X)}$ via
\[ -\Delta f(x):=\frac{1}{m(x)}\sum_{y\in X}b(x,y)\left(f(x)-f(y)\right).\]
The {associated quadratic form} {$\EE\colon C_c(X)\to \RR$} is given by
\[ \EE(\phi):=\frac{1}{2} \sum_{x,y\in X}b(x,y)\abs{\phi(x)-\phi(y)}^2, \quad \phi\in C_c(X).\]
Using Green's formula, we have 
\[\EE(\phi)=\sum_{x\in X}(-\Delta\phi)(x)\phi(x)m(x), \qquad \phi\in C_c(X).\]

{If $V\in C(X)$, a function $u\in C(X)$ is ($-\Delta+V$)-\emph{(super-)harmonic} on $\Omega \sse X$ if $-\Delta u+Vu=0$ (resp., $-\Delta u+Vu\geq 0$) on $\Omega$.} A ($-\Delta$)-(super-)harmonic function is called (super-)harmonic.

\subsection{A Short Reminder of Discrete Optimality Theory}
Let us briefly recall the definition of optimality in the discrete setting, cf.~\cite{F:Thesis,F:Opti, KePiPo2}.

\begin{definition}\label{def:optimal}
	Let $\Omega\sse X$, and assume that there is $0\lneq w\in C(X)$ such that 
\[\EE(\phi)\geq {\norm{\phi}^2_{w}}, \qquad \phi \in C_c(\Omega).\]
Then $w$ is called \emph{Hardy weight} of $-\Delta$ on $\Omega$, and $-\Delta$ is called \emph{subcritical} on $\Omega$. Furthermore, $w$ is called \emph{optimal} if the following three properties are satisfied:
\begin{enumerate}
	\item $-\Delta- w$ is \emph{critical} on $\Omega$, that is,
	there exists a unique (up to multiplication by a constant), positive ($-\Delta-w$)-superharmonic function $u$ on $\Omega$, called \emph{Agmon ground state}. 
	\item $-\Delta-w$ is \emph{null-critical} on $\Omega$ with respect to $w$, that is, the Agmon ground state $u$ is not in $\ell^2(\Omega, wm)$.
	\item $w$ is \emph{optimal near infinity}, that is, {$\EE  \ngeq \norm{\cdot}^2_{(1+\lambda )w}$ on $C_c(\Omega\setminus K)$} for all $\lambda >0$ and finite $K\subseteq \Omega$.
\end{enumerate}
\end{definition}

A function $f\in C(X)$ which is strictly positive on $\Omega\sse X$ is \emph{of bounded oscillation} on $\Omega$ if
\[ \sup_{\substack{x,y\in \Omega\\ b(x,y)>0}} \frac{f(x)}{f(y)}<\infty. \]

\begin{remark}
	\begin{enumerate}
		\item If the Agmon ground state exists, it is ($-\Delta-w$)-{harmonic}, cf.~\cite{KePiPo1}. Moreover, as on manifolds, $-\Delta -w$ is critical on $\Omega$ if and only if for any $W\gneq w$ on $\Omega$ the Hardy inequality does not hold on $C_c(\Omega)$, cf.~\cite{F:AAP, F:Thesis, KPP20}.
		\item In contrast to the situation on manifolds, null-criticality implies optimality near infinity on locally finite graphs if the Agmon ground state is of bounded oscillation, see~\cite[Theorem~11.2]{F:Thesis} or \cite{F:Opti, KePiPo2}.
	\end{enumerate}
\end{remark}

\begin{example}\label{ex:N}
	{In contrast to the situation on $(0,\infty)$, the classical Hardy weight on $\Omega = \NN\subsetneq \NN_0=X$ is not optimal. An optimal improvement of the original discrete Hardy inequality is also known as Keller-Pinchover-Pogorzelski inequality \cite{KS21}, and can be observed as follows:}
	
	The standard line graph on $\NN_0$ is given by setting $X=\NN_0$, $m=1$, $b(x,y)=1$ if and only if $\abs{x-y}=1$ for $x,y\in \NN_0$, and $b(x,y)=0$ otherwise. The action of the Laplacian on $C(\NN_0)$ is then given by {$-\Delta v(0)= v(0)- v(1)$ and} 
	\[ -\Delta v(n)= 2 v(n)- v(n-1)-v(n+1), \quad n\in \NN.\]
	Furthermore, $k_+=k_-=1$ on $\NN$, and the area function here is simply $\area_0(n)=1$ for all $n\in \NN$.  Taking $u(r)=r/\area_0(r)=r$ on $\NN$ and $u(0)=0$, we obtain the same optimal (Poincaré-) Hardy weight {as} Keller, Pinchover and Pogorzelski  in \cite{KePiPo3}. Specifically, we get by elementary computations
	\[ \sum_{n=1}^\infty \left( \phi(n)- \phi(n-1) \right)^2\geq \sum_{n=1}^\infty\left( \frac{1}{4n^2} + O(n^{-4}) \right)\phi^2(n), \quad \phi\in C_c(\NN).\]
	{In the following we generalise the basic idea in this example.}
\end{example}

{Motivated by the Agmon-Allegretto-Piepenbrink theorem in the continuum, Lemma~\ref{lem:davies}, we recall its discrete counterpart. For a proof cf. \cite[Proposition~4.8 and  Theorem~5.3]{KePiPo1}.}

\begin{lemma}[Agmon-Allegretto-Piepenbrink theorem]\label{lem:GST}
	Let $\Omega \sse X$, $v>0$ in $\Omega$, and set $w= -\Delta v / v \geq 0$ on $\Omega$. Then,
	\[\EE (v\phi)- \norm{v\phi}^2_w = \frac{1}{2}\sum_{x,y\in X}b(x,y)v(x)v(y)\left(\phi(x)-\phi(y) \right)^2, \qquad \phi \in C_c(\Omega).\]
	Moreover, $-\Delta -w$ is critical in $\Omega$ if and only if there is a sequence $(e_n)$ in $C_c(\Omega)$ such that $e_n(o)=\alpha> 0$ for some fixed $o\in \Omega$ and $\alpha > 0$, and $\EE (e_n)- \norm{e_n}^2_w \to 0$ as $n\to \infty$.
\end{lemma}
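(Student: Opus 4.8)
The statement naturally splits into two parts of very different character: the \emph{ground state representation} (the displayed identity, from which the Hardy inequality $\EE\ge\norm{\cdot}_w^2$ on $C_c(\Omega)$ is immediate, since $v\phi$ ranges over all of $C_c(\Omega)$ as $\phi$ does and the right-hand side is manifestly non-negative) and the \emph{criticality criterion} (the ``moreover''). I would prove the first by a direct computation and reduce the second to a classical fact via the ground state transform.

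For the identity I would mimic the proof of Lemma~\ref{lem:davies}. Since $\phi\in C_c(\Omega)$ and the graph is locally finite, $v\phi\in C_c(\Omega)$ and every sum occurring below is finite, so rearrangements are harmless. The brute-force route is to expand $2\EE(v\phi)=\sum_{x,y}b(x,y)(v(x)\phi(x)-v(y)\phi(y))^2$ and, separately, the right-hand side of the asserted identity together with $2\norm{v\phi}_w^2$, and to observe that $w=-\Delta v/v$ on $\Omega$ is precisely the statement $w(x)v(x)m(x)=\sum_y b(x,y)(v(x)-v(y))$ for $x\in\Omega$; both expressions collapse to $2\sum_{x,y}b(x,y)v(x)^2\phi(x)^2-2\sum_{x,y}b(x,y)v(x)v(y)\phi(x)\phi(y)$. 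A cleaner route is to first record the discrete Leibniz rule $-\Delta(v\phi)(x)=v(x)(-\Delta\phi)(x)+\phi(x)(-\Delta v)(x)-\tfrac1{m(x)}\sum_y b(x,y)(v(x)-v(y))(\phi(x)-\phi(y))$, insert $-\Delta v=wv$, and apply Green's formula $\EE(v\phi)=\ip{-\Delta(v\phi)}{v\phi}$. Either way this is short and routine.

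For the criticality criterion the decisive device is the \emph{ground state transform}: combining the Leibniz rule above with $-\Delta v=wv$ on $\Omega$ yields $(-\Delta-w)(v\phi)=v\cdot(-\Delta_v\phi)$ on $\Omega$, where $-\Delta_v$ is the potential-free graph Laplacian of the edge weights $b_v(x,y):=b(x,y)v(x)v(y)$ over $(X,v^2m)$, whose energy form is exactly the right-hand side of the displayed identity. Since $\phi\mapsto v\phi$ is a bijection of $C_c(\Omega)$ onto itself intertwining $-\Delta-w$ with $-\Delta_v$, and since it sends positive superharmonic functions to positive superharmonic functions, $-\Delta-w$ is critical on $\Omega$ if and only if $-\Delta_v$ is; and $-\Delta_v$, carrying no potential, is critical on $\Omega$ exactly when the transformed graph is recurrent (parabolic) there. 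I would then invoke the classical characterisation of recurrence: $b_v$ is recurrent on $\Omega$ iff there is $(\phi_n)\sse C_c(\Omega)$ with $\phi_n(o)=1$ for a fixed $o\in\Omega$ and transformed energy tending to $0$ (one direction is a definition; for the other, $\tfrac12\sum b_v(x,y)(\phi_n(x)-\phi_n(y))^2\to0$ forces $|\phi_n(x)-\phi_n(y)|\to0$ along edges, hence $\phi_n\to1$ pointwise by connectedness). Translating through $e_n:=v\phi_n$, so that $e_n(o)=v(o)=:\alpha>0$ and $\EE(e_n)-\norm{e_n}_w^2$ equals the transformed energy of $\phi_n$ by the identity, gives one implication, and $\phi_n:=e_n/v\in C_c(\Omega)$ gives the converse. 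Alternatively, the nontrivial implication can be argued directly: if $-\Delta-w$ were subcritical, its minimal positive Green's function $G=G_w(\cdot,o)$ would satisfy $|\phi(o)|^2=\EE_w(G,\phi)^2\le\EE_w(G)\,\EE_w(\phi)=G_w(o,o)\,\EE_w(\phi)$ by Cauchy--Schwarz for the non-negative bilinear form $\EE_w:=\EE-\norm{\cdot}_w^2$, so $\EE_w(e_n)\ge\alpha^2/G_w(o,o)>0$, contradicting $\EE_w(e_n)\to0$.

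I expect the real work to lie entirely in the criticality criterion, not the identity. The delicate points there are: reconciling the definition of criticality (uniqueness of a positive superharmonic function) with recurrence of the transformed graph; justifying the Cauchy--Schwarz step for $G$, which is not compactly supported and hence belongs only to the extended Dirichlet form; and verifying that a test sequence pinned down solely at the single vertex $o$ genuinely forces criticality rather than, say, concentrating its energy away from $o$. These are standard but technical, and at that stage I would simply refer to \cite[Proposition~4.8 and Theorem~5.3]{KePiPo1}, as well as \cite{F:Thesis, F:Opti, KePiPo2}.
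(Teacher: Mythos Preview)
Your proposal is correct and in fact goes well beyond what the paper does: the paper gives no proof at all for this lemma, merely citing \cite[Proposition~4.8 and Theorem~5.3]{KePiPo1}. Your sketch is essentially an outline of what happens in those references. The ground state representation via the discrete Leibniz rule and Green's formula is exactly the standard argument, and your reduction of the criticality criterion to recurrence of the Doob-transformed graph $(b_v,v^2m)$ is precisely the content of the cited results. The alternative Cauchy--Schwarz argument with the Green's function is also well known and works; your closing caveat about needing the extended Dirichlet space for $G$ is the right place to be careful, and deferring to the literature there is appropriate.

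One small point: in your brute-force route you claim both sides collapse to the same explicit expression $2\sum b(x,y)v(x)^2\phi(x)^2-2\sum b(x,y)v(x)v(y)\phi(x)\phi(y)$. The identity is of course correct, but that particular intermediate form is not quite what one gets; the cleaner bookkeeping is to compute the difference $(v(x)\phi(x)-v(y)\phi(y))^2-v(x)v(y)(\phi(x)-\phi(y))^2=\phi(x)^2v(x)(v(x)-v(y))+\phi(y)^2v(y)(v(y)-v(x))$, symmetrise, and then use $-\Delta v=wv$ on the support of $\phi$. This is cosmetic and does not affect the validity of your argument.
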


\subsection{Homogeneous Regular Trees}
We start by studying homogeneous regular trees, sometimes considered to be the discrete counterpart of hyperbolic spaces.

The following result (in the unrooted case) can basically be found in \cite[Section~2.1]{BSV21}. Note that their proof relays on the general optimality theorem \cite[Theorem~1.1]{KePiPo2}. We have a different ansatz which can be generalised to a wider class of graphs. Moreover, the definition of the underlying tree differs slightly.

 Let $d\geq 2$. A graph with root $o\in X$ is a \emph{rooted homogeneous $(d+1)$-regular tree}, denoted by $\TT_{d+1}$, if $b(X\times X)=\set{0,1}$, $m=1$, $k_+(x)= d$, $k_-(x)=1$, $k(x)=0$ for all $x\in \TT_{d+1}\setminus \{ o\}$, {and the root $o$ has $d$ children and no parent, i.e., $k(o)=k_-(o)=0$ and $k_+(o)=d$}, cf.~\cite[Example~9.3.11]{KPP20}. Note that for a \emph{homogeneous $(d+1)$-regular tree} also the root has $d+1$ neighbours, confer \cite{BSV21}. {We consider rooted trees because $\TT_{2}$ is isomorphic to $\NN_0$. Thus, Hardy inequalities on these rooted trees are a generalisation of the classical discrete Hardy inequality on $\NN_0$.}

We state two theorems in following, Theorem~\ref{thm:PHtrees_1} and Theorem~\ref{thm:PHtrees_2}. The first one follows the narrative of the discrete (Poincaré-) Hardy inequality on $\NN_{{0}}$ from the Example~\ref{ex:N} by setting a Dirichlet boundary condition on the set of test functions at zero. The second one can be interpreted as the counterpart to the case on hyperbolic spaces as it is an inequality on the whole tree. We will only prove Theorem~\ref{thm:PHtrees_2} and comment on the necessary changes to prove Theorem~\ref{thm:PHtrees_1}. 
	
Moreover, Theorem~\ref{thm:PHtrees_1} is stated for $d\geq 1$, {while} Theorem~\ref{thm:PHtrees_2} only for $d\geq 2$. This is because $\NN_0$ is isomorphic to $\TT_{2}$, and there only is a Hardy inequality on $\NN_0$ if we take an extra Dirichlet boundary condition {at zero} into account, cf. Example~\ref{ex:N}. It should be mentioned that in case $d=1$, we obtain the same optimal inequality as \cite{KePiPo3}.
	
	The Laplacian of a spherically symmetric function $u$ on $\TT_{d+1}$ is explicitly given by $-\Delta u(0)= d (u(0)-u(1))$ and 
 \[ -\Delta u(r)= (d+1)u(r)- u(r-1)-d u(r+1), \qquad r> 0. \]
	Recall that $\lambda_0(\TT_{d+1})= (\sqrt{d}-1)^2$ is the bottom of the $\ell^2$-spectrum of the (rooted) tree, cf.~\cite{MoharWoess89}.

\begin{theorem}[Poincaré-Hardy-type inequality on $\TT_{d+1}\setminus \set{o}$]\label{thm:PHtrees_1}
	Let {$d\geq 1$}. On $\TT_{d+1}$,	we have 
		\[\EE (\phi)\geq \norm{\phi}^2_{ w_0}, \qquad \phi \in C_c(\TT_{d+1}\setminus \set{o}),\]
	with optimal weight
		\[ w_0(r):= {\lambda_0(\TT_{d+1})} + \sqrt{d}\left(2-\sqrt{1-\frac{1}{r}}-\sqrt{1+\frac{1}{r}}\right), \quad r\geq 1.\]
	In particular, for all $r\geq 2$, 
	\[ w_0(r)= \lambda_0(\TT_{d+1}) +\frac{\sqrt{d}}{4 r^2} + 2\cdot \sqrt{d} \sum_{n\in 2 \NN \setminus\set{2}}{\binom{2n}{n}\frac{1}{2^{2n}(2n-1)}}\frac{1}{r^n}.\]
\end{theorem}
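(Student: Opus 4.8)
The plan is to follow the very same strategy laid out for the hyperbolic space in Theorem~\ref{thm:PHhyperbolic}, now transplanted to $\TT_{d+1}$ via the discrete Agmon-Allegretto-Piepenbrink theorem, Lemma~\ref{lem:GST}. First I would take the spherically symmetric analogue of $u=r/f$, namely the function determined by the area function of the rooted tree. Since $k_-\equiv 1$ on $\TT_{d+1}\setminus\{o\}$ and $\vol(r)=\#S(r)=d^r$ (with $\vol(0)=1$), one checks $\area(r)=\sum_{y\in S(r)}k_-(y)m(y)=d^{r-1}$ for $r\geq 1$. Thus the candidate superharmonic function is
\[ u(r):=\frac{r}{\area(r)}=\frac{r}{d^{r-1}}=r\,d^{1-r},\qquad r\geq 1,\quad u(0):=0,\]
and one sets $w_0:=-\Delta\sqrt{u}/\sqrt{u}$ on $\TT_{d+1}\setminus\{o\}$. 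The first computational step is to verify, using the explicit radial Laplacian $-\Delta v(r)=(d+1)v(r)-v(r-1)-d\,v(r+1)$, that $\sqrt{u}$ is indeed $(-\Delta-w_0)$-harmonic on $\TT_{d+1}\setminus\{o\}$ with $w_0$ exactly the claimed weight; because $\sqrt{u(r)}=\sqrt{r}\,d^{(1-r)/2}$, the factors $d^{\mp 1/2}$ combine with the constant term to yield $(d+1)-2\sqrt d=(\sqrt d-1)^2=\lambda_0(\TT_{d+1})$, and the residual terms involving $\sqrt{r\pm1}/\sqrt r=\sqrt{1\pm 1/r}$ produce precisely $\sqrt d\,(2-\sqrt{1-1/r}-\sqrt{1+1/r})$. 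The second, closed-form expansion in the statement is then a routine application of the binomial series $\sqrt{1\pm x}=\sum_{n\geq 0}\binom{1/2}{n}(\pm x)^n$ for $x=1/r\le 1/2$: the odd powers cancel in the sum of the two square roots, the $n=0$ and $n=1$ terms give $2$ and $0$ respectively, the $n=2$ term gives $-\tfrac14 r^{-2}$ contributing $\sqrt d/(4r^2)$, and rewriting $-2\binom{1/2}{n}=\binom{2n}{n}/(2^{2n}(2n-1))$ for even $n\geq 2$ yields the displayed series; one should note the series converges for $r\geq 2$, which is why that case is singled out. The Agmon-Allegretto-Piepenbrink identity of Lemma~\ref{lem:GST} then immediately gives the Hardy inequality $\EE(\phi)\geq\norm{\phi}^2_{w_0}$ on $C_c(\TT_{d+1}\setminus\{o\})$, and positivity of $w_0$ follows from convexity of $\sqrt{\cdot}$ (so $\sqrt{1-1/r}+\sqrt{1+1/r}\le 2$).

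For optimality, I would mimic the hyperbolic proof and produce, alongside $\sqrt u$, a second positive radial $(-\Delta-w_0)$-harmonic function that grows. The natural ansatz is $r\mapsto\sqrt{u(r)}\,g(r)$ with $g$ a discrete analogue of $\log$; concretely one looks for $g$ solving the first-order recursion obtained by plugging $\sqrt u\cdot g$ into $-\Delta-w_0$ and using that $\sqrt u$ is already harmonic, which reduces (after telescoping, exactly as in Lemma~\ref{lem:GST}'s ground-state transform) to a summable condition forcing $g(r)\sim c\log r$ or, more cleanly, $g(r)=\sum_{k=1}^{r-1} 1/(k\,u(k)\cdot(\text{something}))$; the point is only that $g$ is positive, increasing and unbounded for large $r$ while $\sqrt u(r)/(\sqrt u(r)g(r))=1/g(r)\to 0$. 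Replacing $g$ by a reciprocal-type solution near the root handles minimal growth near $o$. Feeding these two comparison functions into the discrete Khas'minski\u{\i}-type criterion — here realised through the last clause of Lemma~\ref{lem:GST}, i.e.\ constructing a null sequence $(e_n)$ with $e_n(o)=\alpha$ and $\EE(e_n)-\norm{e_n}^2_{w_0}\to 0$ by the standard logarithmic cutoff of $\sqrt u$ against $g$ — yields criticality of $-\Delta-w_0$ on $\TT_{d+1}\setminus\{o\}$. Null-criticality is the easy part: since $w_0(r)\geq \sqrt d/(4r^2)$ for large $r$ and $\sqrt u(r)^2=r\,d^{1-r}$ while $\area$-type weights reintroduce the $d^{r-1}$, one gets $\sum_r \sqrt u(r)^2 w_0(r) m(S(r))\gtrsim \sum_r r\,d^{1-r}\cdot r^{-2}\cdot d^r=\infty$, so $\sqrt u\notin\ell^2(\TT_{d+1}\setminus\{o\},w_0 m)$; and since $\sqrt u$ has obviously bounded oscillation ($\sqrt{u(r)}/\sqrt{u(r\pm1)}$ is bounded), the remark after Definition~\ref{def:optimal} upgrades null-criticality to optimality near infinity, completing the proof that $w_0$ is optimal.

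I expect the main obstacle to be the criticality step — specifically, pinning down the correct discrete "logarithm" $g$ and verifying the null-sequence condition in Lemma~\ref{lem:GST}. On the manifold the functions $\sqrt u\cdot\ln r$ and $\sqrt u\cdot\ln(1/r)$ are handed to us by a clean second-order ODE; on the tree one must instead solve a second-order linear recurrence whose two-dimensional solution space is spanned by $\sqrt u$ and $\sqrt u\cdot g$, and exhibiting $g$ with the right asymptotics (unbounded but slowly, so that the cutoff energies vanish) requires a short but genuine computation with the recursion for $\area$-weighted differences — this is where the argument is least "plug and play." The enlargement from $\TT_{d+1}\setminus\{o\}$ to all of $\TT_{d+1}$ that appears in Theorem~\ref{thm:PHtrees_2} is \emph{not} needed here, since Theorem~\ref{thm:PHtrees_1} is stated with the Dirichlet condition at $o$ built in; that is precisely the simplification that lets the case $d=1$ (i.e.\ $\NN_0$, recovering \cite{KePiPo3}) go through unchanged. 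Everything else — the identification of $w_0$, the binomial expansion, positivity, and null-criticality — is elementary manipulation of $\sqrt r\,d^{(1-r)/2}$ and the explicit radial Laplacian.
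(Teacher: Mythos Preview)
Your proposal is essentially correct and follows the paper's own route: take $u(r)=r/\area(r)$, set $w_0=-\Delta\sqrt u/\sqrt u$, apply the discrete Agmon--Allegretto--Piepenbrink identity (Lemma~\ref{lem:GST}) for the inequality, expand via the binomial series, and verify null-criticality plus bounded oscillation exactly as you describe. Two small points are worth noting. First, a harmless slip: on the rooted tree $\vol(r)=|S(r)|=d^r$ and $k_-\equiv 1$ give $\area(r)=d^r$, not $d^{r-1}$; this only changes $u$ by a constant factor and leaves $w_0$ untouched. Second, for criticality the paper does \emph{not} construct a second $(-\Delta-w_0)$-harmonic function or invoke a discrete Khas'minski\u{\i} principle; it goes straight to the null-sequence clause of Lemma~\ref{lem:GST} with the explicit logarithmic cutoff $e_n=\sqrt u\cdot\phi_n$, $\phi_n(r)=(1-\ln r/\ln n)_+$, and reduces $\EE(e_n)-\norm{e_n}^2_{w_0}$ to the elementary sum handled by Lemma~\ref{lem:Hilfe}. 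Your detour through a second solution $\sqrt u\cdot g$ is not wrong in spirit, but it is unnecessary scaffolding---the cutoff alone does the job, and the ``genuine computation'' you anticipate is precisely Lemma~\ref{lem:Hilfe}.
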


\begin{theorem}[Poincaré-Hardy-type inequality on $\TT_{d+1}$]\label{thm:PHtrees_2}
	Let $d\geq 2$. On $\TT_{d+1}$,	we have 
	\[\EE (\phi)\geq \norm{\phi}^2_{ w_\gamma}, \qquad \phi \in C_c(\TT_{d+1}),\]
	where for any {$\gamma\in \left[ d/(2\sqrt{d}-1)^2 ,  (2-2\sqrt{2})^2\right]$} the optimal Hardy weights $w_\gamma$ are given by
	\[ w_\gamma(r):= \lambda_0(\TT_{d+1}) + \begin{cases}
	 \sqrt{d}\left(2- \frac{1}{\sqrt{\gamma}}\right)-1, \quad &r=0,\\
		{\sqrt{d}(2-2\sqrt{2}-\sqrt{\gamma})}, &r=1,\\
		{ \sqrt{d}\left(2-\sqrt{1-\frac{1}{r}}-\sqrt{1+\frac{1}{r}}\right)}, &r\geq 2.
	\end{cases}\]
	In particular, for all $r\geq 2$, 
	\[ w_\gamma(r)= \lambda_0(\TT_{d+1}) +\frac{\sqrt{d}}{4 r^2} + 2\sqrt{d} \sum_{n\in 2 \NN \setminus\set{2}}\binom{2n}{n}\frac{1}{2^{2n}(2n-1)}\frac{1}{r^n}.\]
\end{theorem}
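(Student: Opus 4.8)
The plan is to run the proof of Theorem~\ref{thm:PHhyperbolic} verbatim in the discrete setting, with the area function $\area$ playing the role of the volume density $f$ and the combinatorial distance $\abs{\cdot}$ that of $r=d(o,\cdot)$. On the rooted $(d+1)$-regular tree every vertex of $S(r)$, $r\ge 1$, has exactly one neighbour in $S(r-1)$, so the number of bonds between consecutive spheres is $\area(r)=\abs{S(r)}=d^{r}$, and likewise $\vol(r)=\abs{S(r)}=d^{r}$. Following the manifold ansatz I set $u:=\abs{\cdot}/\area$, that is $u(r):=r\,d^{-r}$ for $r\ge 1$ and $u(0):=0$, so that $\sqrt u(r)=\sqrt r\,d^{-r/2}$. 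Inserting this into the radial Laplacian $-\Delta v(r)=(d+1)v(r)-v(r-1)-d\,v(r+1)$ and using $d+1=\lambda_0(\TT_{d+1})+2\sqrt d$, an elementary computation gives
\[
 -\Delta\sqrt u(r)=\Bigl(\lambda_0(\TT_{d+1})+\sqrt d\,\bigl(2-\sqrt{1-\tfrac1r}-\sqrt{1+\tfrac1r}\bigr)\Bigr)\sqrt u(r),\qquad r\ge 1,
\]
so $\sqrt u$ is a positive $(-\Delta-w_0)$-harmonic function on $\TT_{d+1}\setminus\set{o}$, where $w_0$ is the weight of Theorem~\ref{thm:PHtrees_1}; this already gives Theorem~\ref{thm:PHtrees_1} modulo the optimality assertions established below. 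To reach an inequality on all of $C_c(\TT_{d+1})$ I would lift the ground state to a function that is strictly positive at $o$: set $\psi_\gamma(r):=\sqrt r\,d^{-r/2}$ for $r\ge 1$ and $\psi_\gamma(0):=\sqrt\gamma$. Then $\psi_\gamma>0$ on $\TT_{d+1}$, and $w_\gamma:=-\Delta\psi_\gamma/\psi_\gamma$ coincides with $w_0$ for $r\ge 2$, while the two exceptional vertices produce, through $-\Delta\psi_\gamma(0)=d(\psi_\gamma(0)-\psi_\gamma(1))$ and $-\Delta\psi_\gamma(1)=(d+1)\psi_\gamma(1)-\psi_\gamma(0)-d\psi_\gamma(2)$, exactly the two modified values $w_\gamma(0),w_\gamma(1)$ in the statement; an elementary case check then identifies the set of $\gamma$ for which $w_\gamma\ge 0$ on $\TT_{d+1}$ as the stated interval (and shows it is empty when $d=1$, consistently with $\TT_2\cong\NN_0$ being critical). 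For admissible $\gamma$, the discrete Agmon-Allegretto-Piepenbrink identity (Lemma~\ref{lem:GST}), applied with $v=\psi_\gamma$ and $\phi$ rewritten as $\psi_\gamma\cdot(\phi/\psi_\gamma)$ -- legitimate since $\psi_\gamma>0$ and $\phi/\psi_\gamma\in C_c(\TT_{d+1})$ -- gives $\EE(\phi)\ge\norm{\phi}^2_{w_\gamma}$ for all $\phi\in C_c(\TT_{d+1})$.

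Next I would verify the three optimality properties. For \emph{criticality} I use the second half of Lemma~\ref{lem:GST}. An order reduction for the radial recurrence (equivalently, the observation that the discrete Casoratian of two of its solutions is a constant multiple of $d^{-r}$) shows that a second, linearly independent radial $(-\Delta-w_0)$-harmonic function on $\set{r\ge 2}$ is $v_2=\psi_\gamma\cdot h$ with $h(r):=\sum_{k=1}^{r-1}(k(k+1))^{-1/2}$; since $(k(k+1))^{-1/2}\asymp k^{-1}$, one has $h(r)\to\infty$. Putting $\eta_n:=(1-h/h(n))_+$ and $e_n:=\psi_\gamma\eta_n\in C_c(\TT_{d+1})$, we have $e_n(o)=\sqrt\gamma$ for every $n$; moreover only bonds between consecutive spheres contribute to the identity in Lemma~\ref{lem:GST}, and with $\area(r+1)=d^{r+1}$, $\psi_\gamma(r)\psi_\gamma(r+1)=\sqrt{r(r+1)}\,d^{-r-1/2}$ and $h(r+1)-h(r)=(r(r+1))^{-1/2}$ for $r\ge1$ (the $r=0$ term vanishing),
\[
 \EE(e_n)-\norm{e_n}^2_{w_\gamma}=\sum_{r\ge 1}\area(r+1)\,\psi_\gamma(r)\,\psi_\gamma(r+1)\,\frac{\bigl(h(r+1)-h(r)\bigr)^2}{h(n)^2}=\frac{\sqrt d}{h(n)},
\]
which tends to $0$ as $n\to\infty$; hence $-\Delta-w_\gamma$ is critical and $\psi_\gamma$ is its Agmon ground state. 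For \emph{null-criticality}, the displayed power series for $w_\gamma$ on $\set{r\ge 2}$ has non-negative coefficients, so $w_\gamma(r)\ge\sqrt d/(4r^2)$; together with $\psi_\gamma(r)^2=r\,d^{-r}$ and $\vol(r)=d^{r}$ this yields $\sum_r\psi_\gamma(r)^2 w_\gamma(r)\vol(r)\ge\tfrac{\sqrt d}{4}\sum_r r^{-1}=\infty$, i.e.\ $\psi_\gamma\notin\ell^2(\TT_{d+1},w_\gamma m)$. Finally $\psi_\gamma$ is of bounded oscillation, since across any bond the ratio $\psi_\gamma(x)/\psi_\gamma(y)$ and its reciprocal take the values $\sqrt{\gamma d}$ (at $o$) or $\sqrt d\,\sqrt{r/(r+1)}\in[\sqrt{d/2},\sqrt d]$; hence, by the remark following Definition~\ref{def:optimal}, null-criticality forces optimality near infinity, and $w_\gamma$ is an optimal Hardy weight. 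The ``in particular'' expansion for $r\ge 2$ then follows by substituting $\sqrt{1\pm 1/r}=\sum_{n\ge 0}\binom{1/2}{n}(\pm r^{-1})^n$, noting that the odd powers cancel and that $-\binom{1/2}{n}=\binom{2n}{n}2^{-2n}(2n-1)^{-1}$ for even $n$, the $n=2$ contribution being $\sqrt d/(4r^2)$.

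The step I expect to be the main obstacle is the bookkeeping at the two exceptional vertices $r=0,1$: it is precisely the interplay between $\psi_\gamma(0)=\sqrt\gamma$, the modified values $w_\gamma(0),w_\gamma(1)$, and the admissibility constraint $w_\gamma\ge 0$ that singles out the exact interval for $\gamma$ and the hypothesis $d\ge 2$ -- the delicate feature being that $w_\gamma(0)\ge 0$ and $w_\gamma(1)\ge 0$ push $\gamma$ in opposite directions. A secondary point needing care is the identification of the second radial solution $v_2=\psi_\gamma h$ on which the null sequence witnessing criticality is built; once that is settled, everything else is a routine application of the discrete Agmon-Allegretto-Piepenbrink identity together with the null-criticality and bounded-oscillation criteria recalled in the text.
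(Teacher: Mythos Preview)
Your proposal is correct and follows the same overall strategy as the paper: take $u(r)=r/\area(r)=r\,d^{-r}$, extend by $u(0)=\gamma$, set $w_\gamma=-\Delta\sqrt u/\sqrt u$, apply the discrete Agmon--Allegretto--Piepenbrink identity (Lemma~\ref{lem:GST}) for the inequality, and then verify criticality, null-criticality, and bounded oscillation. The computations for $w_\gamma$, null-criticality, bounded oscillation, and the binomial series expansion are the same in both.

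The one substantive difference is in the \emph{criticality} step. The paper chooses the null sequence $e_n=\sqrt u\cdot\phi_n$ with the logarithmic cut-off $\phi_n(r)=(1-\ln r/\ln n)_+$, which forces an auxiliary estimate (Lemma~\ref{lem:Hilfe}) to show the right-hand side of Lemma~\ref{lem:GST} tends to zero. You instead build the cut-off from the second radial $(-\Delta-w_\gamma)$-harmonic function $v_2=\psi_\gamma h$ with $h(r)=\sum_{k=1}^{r-1}(k(k+1))^{-1/2}$, and set $\eta_n=(1-h/h(n))_+$. Your route gives an exact telescoping evaluation $\EE(e_n)-\norm{e_n}^2_{w_\gamma}=\sqrt d/h(n)\to 0$, avoiding the technical lemma entirely; it is closer in spirit to the continuous Khas'minski\u{\i} argument with $\sqrt u\cdot\ln$ in Theorem~\ref{thm:PHhyperbolic} (indeed $h(r)\sim\ln r$). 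The paper's logarithmic cut-off, on the other hand, is what carries over verbatim to the general weakly spherically symmetric setting of Theorems~\ref{thm:optimalWSSG_1}--\ref{thm:optimalWSSG_2}, where no closed-form second solution is available.

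One small imprecision: you say the interval for $\gamma$ is determined by $w_\gamma\ge 0$, whereas the paper's proof says it is determined by the stronger condition $w_\gamma-\lambda_0(\TT_{d+1})\ge 0$; the lower endpoint $d/(2\sqrt d-1)^2$ matches the latter, not the former (which would give $\gamma\ge 1/d$). This does not affect the validity of the argument, but you should phrase the ``elementary case check'' accordingly.
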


We need the following technical lemma for showing criticality.
\begin{lemma}\label{lem:Hilfe}
	We have 
	\[ \lim_{n\to \infty} \frac{1}{\ln^2(n)} \sum_{r=1}^{n-1}\ln^2\left(1+\frac{1}{r}\right)\cdot r \cdot \sqrt{1+\frac{1}{r}} = 0.\]
\end{lemma}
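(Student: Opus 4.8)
The key observation is that for large $r$ the quantity $\ln(1+1/r)$ behaves like $1/r$, and $r\cdot\sqrt{1+1/r}$ behaves like $r$, so the summand $\ln^2(1+1/r)\cdot r\cdot\sqrt{1+1/r}$ is asymptotically comparable to $1/r$. Consequently the partial sums $\sum_{r=1}^{n-1}(\dots)$ grow like $\ln n$, which is of lower order than $\ln^2 n$ in the denominator, and the limit is $0$. The plan is to make this precise by finding an explicit elementary upper bound on the summand valid for all $r\geq 1$ and then estimating the resulting sum by a logarithm.

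\textbf{Key steps.} First, I would establish a clean pointwise bound: since $0<\ln(1+1/r)\le 1/r$ and $\sqrt{1+1/r}\le\sqrt{2}$ for all $r\ge 1$, we get
\[
  0\;\le\;\ln^2\!\left(1+\frac1r\right)\cdot r\cdot\sqrt{1+\frac1r}\;\le\;\frac{1}{r^2}\cdot r\cdot\sqrt{2}\;=\;\frac{\sqrt{2}}{r}.
\]
Second, summing this over $r=1,\dots,n-1$ and comparing with an integral (or using the standard harmonic-sum estimate $\sum_{r=1}^{n-1}1/r\le 1+\ln(n-1)\le 1+\ln n$) yields
\[
  0\;\le\;\sum_{r=1}^{n-1}\ln^2\!\left(1+\frac1r\right)\cdot r\cdot\sqrt{1+\frac1r}\;\le\;\sqrt{2}\,(1+\ln n).
\]
Third, dividing by $\ln^2 n$ and letting $n\to\infty$, the right-hand side tends to $0$, so by the squeeze theorem the limit in the statement is $0$.

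\textbf{Main obstacle.} There is essentially no obstacle here; the only point requiring a moment's care is justifying the inequality $\ln(1+x)\le x$ for $x=1/r\in(0,1]$, which is the standard concavity estimate for the logarithm (equivalently $e^x\ge 1+x$), and noting that the summand is manifestly nonnegative so the squeeze applies. One could alternatively bound the summand by $C/r$ only for $r$ large and handle finitely many initial terms separately, but the uniform bound above avoids even that minor case distinction. The estimate is deliberately crude — any bound of the form $o(\ln^2 n)$ on the partial sums suffices, and $O(\ln n)$ is comfortably enough.
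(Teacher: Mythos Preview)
Your proof is correct and follows essentially the same strategy as the paper: show the summand is $O(1/r)$, so the partial sum is $O(\ln n)$, which is $o(\ln^2 n)$. Your execution is in fact cleaner than the paper's --- you use the elementary bounds $\ln(1+x)\le x$ and $\sqrt{1+1/r}\le\sqrt 2$ directly, whereas the paper obtains the $O(1/r)$ estimate via Taylor expansions of $\ln(1+1/r)$ and $\sqrt{1+1/r}$ before summing.
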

\begin{proof}
Approximating the logarithm and the square root, we obtain
\[ \ln \left(1+\frac{1}{r}\right) = \frac{1}{r} - \frac{1}{2r^2} + O(r^{-3}), \quad \sqrt{1+\frac{1}{r}}= 1+ \frac{1}{2r} + O(r^{-2}), \quad r>1. \]
Hence, 
\[\ln^2\left(1+\frac{1}{r}\right)\cdot r \cdot \sqrt{1+\frac{1}{r}} = \frac{1}{r}+\frac{1}{r^2}+O(r^{-3}), \quad r>1.\]
Thus, for some constants $c_1, c_2 > 0$,
\begin{multline*}
	\frac{1}{\ln^2(n)} \sum_{r=1}^{n-1}\ln^2\left(1+\frac{1}{r}\right)\cdot r \cdot \sqrt{1+\frac{1}{r}} \\
	= \frac{1}{\ln^2(n)} \left(\ln^2(2)\sqrt{2} + \sum_{r=2}^{n-1}\frac{1}{r}+\frac{1}{r^2}+O(r^{-3})\right)\\
	 \leq \frac{c_1}{\ln^2(n)} \sum_{r=2}^{n-1} \frac{1}{r} \leq c_2 \frac{\ln(n-1)}{\ln^2(n)} \to 0,
\end{multline*}
\[  \]
as $n\to \infty$.
\end{proof}

\begin{proof}[Proof of Theorem~\ref{thm:PHtrees_1} and Theorem~\ref{thm:PHtrees_2}]
	Let $d\geq 2$. The main idea is to take the radial function
	\begin{align*}
		u(r)= \begin{cases}
			\frac{r}{d^r}, \quad &r=\abs{x}_{o}>0, \\
			\gamma, &x=o,
		\end{cases}
	\end{align*}
	 for some $\gamma\geq 0$. Note that 
	 \[ \area_{o}(r)= d^r, \qquad r\in\NN.\] 
	 
	 Let us check if $w_\gamma := -\Delta \sqrt{u}/ \sqrt{u}$ is a Hardy weight. We first observe that $-\Delta \sqrt{u}(0)=d(\sqrt{\gamma} -\sqrt{d^{-1}})$, and $-\Delta \sqrt{u}(1)={(d+1)}/{\sqrt{d}}- \sqrt{\gamma}- \sqrt{2}.$ For $r\geq 2$, we get instead the following
	 \begin{align*}
	 	-\Delta \sqrt{u}(r)&=(d+1)\sqrt{u}(r)- \sqrt{u}(r-1)- d\sqrt{u}(r+1)\\
	 	&=\sqrt{d}\cdot \sqrt{u}(r)\left(\frac{d+1}{\sqrt{d}} -\sqrt{1-\frac{1}{r}}-\sqrt{1+\frac{1}{r}} \right).
	 \end{align*}
 	Recall that $\lambda_0(\TT_{d+1}) = (\sqrt{d}-1)^2$. Then, we have
 	\[ w_\gamma(r)= \frac{-\Delta \sqrt{u}(r)}{\sqrt{u}(r)}= \lambda_0(\TT_{d+1}) + \begin{cases}
 		\sqrt{d}\left(2- \frac{1}{\sqrt{\gamma}}\right)-1, \quad &r=0,\\
 		{\sqrt{d}(2-2\sqrt{2}-\sqrt{\gamma})}, &r=1,\\
 		{ \sqrt{d}\left(2-\sqrt{1-\frac{1}{r}}-\sqrt{1+\frac{1}{r}}\right)}, &r\geq 2.
 	\end{cases}\]

 	 By the assumption $\gamma\in \left[ d/(2\sqrt{d}-1)^2 ,  (2-2\sqrt{2})^2\right]$, the remainder $w_\gamma(r)- \lambda_0(\TT_{d+1})$ is non-negative for all $r\geq 0$. Moreover, in the case of $\gamma =0$, we see that $w_0(r)- \lambda_0(\TT_{d+1})$ is non-negative for all $r\geq 1$. Hence, we conclude via Lemma~\ref{lem:GST} that $w_\gamma$ is a Hardy weight.
 	 
 	 Let us  show the series representation of $w_\gamma$. Recall the binomial series for $\alpha\in \RR$ and $x\in \RR$ with $\abs{x}<1$:
 	 \[ (1+x)^\alpha= \sum_{n=0}^{\infty}\binom{\alpha}{n}x^n,
 	 \]
 	 where the real binomial coefficients are given by 
 	 \[ \binom{\alpha}{n}:= \frac{\alpha(\alpha -1)\cdots (\alpha - n +1)}{n!}.\] 
 	 In particular, they satisfy the relation
 	 \[ \binom{1/2}{n}= \binom{2n}{n}\frac{(-1)^{n+1}}{2^{2n}(2n-1)}, \qquad n\in\NN.\]
 	 {Applying first the binomial series to both square roots in the definition of $w_\gamma$, adding them and applying the above identity,} we get for all $r\geq 2$
 	 \begin{multline*}
 	 	 w_\gamma(r)= \lambda_0(\TT_{d+1}) +\frac{\sqrt{d}}{4 r^2} - 2\cdot \sqrt{d}\sum_{n\in 2 \NN \setminus\set{2}}\binom{1/2}{n}\frac{1}{r^n} \\
 	 	 = \lambda_0(\TT_{d+1}) +\frac{\sqrt{d}}{4 r^2} + 2\sqrt{d} \sum_{n\in 2 \NN \setminus\set{2}}\binom{2n}{n}\frac{1}{2^{2n}(2n-1)}\frac{1}{r^n}.
 	 \end{multline*}
  
	 Let us turn to criticality. Our proof is inspired by \cite[Lemma~7]{KN23} and \cite[Remark~17]{FKP}. Consider $e_n = \sqrt{u} \cdot \phi_n$ where $\phi_n(o)=1$ and 
	 \[ \phi_n(x)= \phi_n (r)= \left(1 - \frac{\ln r}{\ln n} \right)_+, \qquad x\in S(r), r\geq 0.\]
	 Then $e_n \in C_c(\Omega)$ for $\Omega = \TT_{d+1}\setminus \set{o}$ if $\gamma =0$, and $\Omega = \TT_{d+1}$ otherwise. Moreover, $e_n$ is spherically symmetric,  $e_n(1)=\sqrt{u}(1)>0$ and $e_n\nearrow \sqrt{u}$ pointwise. Let us apply the Agmon-Allegretto-Piepenbrink theorem, Lemma~\ref{lem:GST}, which reads
	 \begin{align*}
	 	\EE (e_n)- \norm{e_n}^2_w = \frac{1}{2}\sum_{x,y\in \TT_{d+1}}b(x,y)\sqrt{u(x)u(y)}\left(\phi_n(x)-\phi_n(y) \right)^2=:I.
	 \end{align*}
	 By symmetry, definition of $\phi_n$ and since we are using the combinatorial distance, we only sum twice over the set
	 \begin{multline*}
	 	 S:=\set{(x,y)\in \TT_{d+1}^2 : 1\leq \abs{y}< \abs{x} \leq n} \\
	 	 = \bigcup_{r=1}^{n-1}\set{(x,y)\in \TT_{d+1}^2 : b(x,y)> 0, \abs{y}=r, \abs{x}=r+1}.
	 \end{multline*}
 	Recall that  $\area(r)= \sum_{y\in S(r)} \sum_{x\in S(r+1)} b(x,y)=d^r$ and $\kappa(r)=d$ on $\TT_{d+1}$. Hence,
		\begin{multline*}
		I= \sum_{(x,y)\in S}b(x,y)\sqrt{u(x)u(y)}\left(\phi_n(x)-\phi_n(y) \right)^2\\
		= \frac{1}{\ln^2 n} \sum_{r=1}^{n-1} \frac{\sqrt{r(r+1)}}{\sqrt{\area (r) \area (r+1)}} \ln^2\left(1+\frac{1}{r}\right)\sum_{y\in S(r)} \sum_{x\in S(r+1)} b(x,y) \\
		=\frac{1}{\ln^2 n} \sum_{r=1}^{n-1}\sqrt{r(r+1)} \sqrt{\frac{\area(r+1)}{\area (r)}} \ln^2\left(1+\frac{1}{r}\right)\\
		= \frac{1}{\ln^2 n} \sum_{r=1}^{n-1}\sqrt{\kappa(r)} {r\sqrt{1+\frac{1}{r}}} \ln^2\left(1+\frac{1}{r}\right).
	\end{multline*}
	 Since $\kappa$ is bounded, we can apply Lemma~\ref{lem:Hilfe}, and thus, the right-hand side tends to $0$ as $n\to \infty$. Hence, by Lemma~\ref{lem:GST}, $(-\Delta -w)$ is critical in $\Omega$.
	 
	 We show null-criticality. From the series representation of $w_\gamma$ it follows that $w_\gamma(r) \geq \sqrt{d}/(4r^2)$ for $r\geq 2$. Recall that $\vol (r)= d^r$. Hence,
	 \begin{align*}
	 	\sum_{x\in \TT_{d+1}\setminus\set{o}} \sqrt{u}^2(x)w_\gamma(x)\geq  \sum_{r=2}^{\infty} \vol (r) \frac{r}{d^r} \frac{\sqrt{d}}{4r^2}= \sum_{r=1}^\infty \frac{\sqrt{d}}{4r} = \infty. 	
	 \end{align*}
 	Thus, $u$ is not in $\ell^2(\Omega, w)$, and therefore $(-\Delta -w)$ is null-critical with respect to $w$.
 	
 	We have that $u$ is of bounded oscillation since for all $r\geq 2$,
 	\[ \frac{u(r)}{u(r\pm 1)}= \frac{r}{r\pm 1}\cdot \frac{d^{r\pm 1}}{d^r}\leq 2 d^{\pm 1}< \infty.\]
 	Hence, by the first remark after Definition~\ref{def:optimal}, we get that $w_\gamma$ is optimal.
	 
	{Note that in the case of $\gamma = 0$, i.e., $\Omega= \TT_{d+1} \setminus \set{o}$, also $d=1$ is a valid choice, cf.~\cite[Theorem~1]{KePiPo3}.}
\end{proof}

\begin{remark}
	The weights $w_\gamma$ and $w_0$ are larger than the optimal weight constructed via the Green's function outside of $B_o(1)$, which is equal to $\lambda_0(\TT_{d+1})$ for all $r\geq 1$, see \cite[Example~9.3.11]{KPP20} or  \cite[Proposition~2.3]{BSV21}.
\end{remark}

\subsection{Weakly Spherically Symmetric Graphs}
Next, we generalise the basic idea from the previous section to larger classes of locally finite graphs. 

Fix $O\sse X$. We want to find abstract conditions under which we {obtain optimality} with $u(r)= {r/\area(r)}$ on $X\setminus O$, {$r= \abs{x}$ for all $x\in S(r)$.} Usually, we set $u=\gamma \geq 0$ outside of $X\setminus O$.

One of the key properties on manifolds was the simplification of the Laplacian for radial functions. A similar result holds on weakly spherically symmetric graphs. Firstly, let us recall {their definition}, see \cite[Chapter~9]{KLW21} for more details. 

\begin{definition}
	A graph on {$(X,m)$} is called \emph{weakly spherically symmetric} with respect to $O\sse X$ if the outer and inner curvatures are spherically symmetric functions with respect to $O$. 
\end{definition}

{These graphs do not need to be locally finite a priori, but, as explained before, we will only consider locally finite ones.} Fundamental examples are homogeneous regular trees and anti-trees, see e.g.~\cite[p.~381]{KLW21} for details. A counterexample is $\ZZ^d$.

Weakly spherically symmetric graphs bear the following key properties, confer \cite[Lemma~9.4]{KLW21}: We have
\[ \area (r)=k_+(r-1)\vol(r-1)=k_-(r)\vol(r), \qquad r\in \NN,\]
and for any spherically symmetric function $v\in C(X)$ with respect to $O$, we have that $-\Delta v$ is spherically symmetric with
\[-\Delta v(x)=k_+(r)(v(r)-v(r+1)) + k_-(r)(v(r)-v(r-1)), \]
for all $x\in S(r), r\in \NN_0$, {where we set $v(-1)=0$, and recall $k_-(0)=0$.}

{If} {$-\Delta$ is subcritical}  on a locally finite weakly spherically symmetric graph with respect to a finite set $O$, the positive minimal Green's function $G:=G_O$ with respect to $O$ can be represented via the area function and is spherically symmetric (see \cite[Corollary~9.10, Exercise~9.11]{KLW21}): for all $x\in S(r)$
\[G(x)= \sum_{n=r+1}^\infty \frac{1}{\area(n)}.\]

If $O$ is a singleton, then a weakly spherically symmetric graph is also called model graph.

The following paragraph presents our main results for weakly spherically symmetric graphs. {As for $\TT_{d+1}$, we separate the results into different theorems. Theorem~\ref{thm:optimalWSSG_1} shows the generalisation of Theorem~\ref{thm:PHtrees_1}, and thus of the classical result on $\NN_0$ in \cite{KePiPo3}, and Theorem~\ref{thm:optimalWSSG_2} shows an optimal inequality on the whole graph in the spirit of Theorem~\ref{thm:PHtrees_2}}.

\begin{theorem}[Poincaré-Hardy-type inequality on $X\setminus  O$]\label{thm:optimalWSSG_1}
	Let $b$ be weakly spherically symmetric over $(X,m)$ with respect to some finite set $O\sse X$. 

	Assume that
	\begin{enumerate}
		\item\label{12} $\kappa$ is bounded and $\kappa (1)\geq 2$, {and}
		\item\label{11} for $r\geq 2$, \[\kappa(r)\geq \frac{1}{r}+\left(1-\frac{1}{r}\right)\kappa(r-1).\] 
	\end{enumerate}
	Then, we get 
	\[\EE (\phi)\geq \norm{\phi}^2_{ w_0}, \qquad \phi \in C_c(X\setminus O),\]
	with optimal weight
	\[ w_0(r):= k_-(r)\left(1 + \kappa(r)- \sqrt{\kappa(r)\left( 1+\frac{1}{r}\right)} -\sqrt{\kappa(r-1)\left( 1-\frac{1}{r}\right)} \right), \quad r\geq 1.\]
	Moreover, 
	\[ w_0(r)\geq k_-(r)\left( \left(\sqrt{\kappa(r)}-1\right)^2+ \frac{\sqrt{\kappa(r)}}{4r^2}\right),  \quad r\geq 2.\]

	In particular, if $\kappa(r)=\kappa \geq 2$ and $k_-(r)=k_-> 0$ are constant, we get the optimal Poincaré-Hardy inequality 
		\[ w_0(r):= k_-(\sqrt{\kappa}-1)^2 + k_-\sqrt{\kappa}\left( 2-\sqrt{1+\frac{1}{r}} -\sqrt{ 1-\frac{1}{r}} \right), \quad r\geq 1,\]
and we have $\lambda_0(X)\geq k_-(\sqrt{\kappa}-1)^2$.
\end{theorem}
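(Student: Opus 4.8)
\textbf{Proof plan for Theorem~\ref{thm:optimalWSSG_1}.}
The strategy mirrors the proof of Theorem~\ref{thm:PHtrees_2}: we take the radial function $u(r) = r/\area(r)$ on $X \setminus O$ (and $u = \gamma = 0$ on $O$), set $w_0 := -\Delta\sqrt{u}/\sqrt{u}$, and verify that $w_0$ is an optimal Hardy weight by checking criticality and null-criticality, then invoking bounded oscillation. First I would compute $-\Delta\sqrt{u}$ explicitly using the radial form of the Laplacian on weakly spherically symmetric graphs,
\[
-\Delta\sqrt{u}(r) = k_+(r)\bigl(\sqrt{u}(r) - \sqrt{u}(r+1)\bigr) + k_-(r)\bigl(\sqrt{u}(r) - \sqrt{u}(r-1)\bigr),
\]
and substitute $u(r) = r/\area(r)$ together with the identities $\area(r) = k_+(r-1)\vol(r-1) = k_-(r)\vol(r)$, which give $\area(r+1)/\area(r) = k_+(r)/k_-(r+1)$ and hence $\sqrt{u}(r\pm1)/\sqrt{u}(r) = \sqrt{(r\pm1)/r}\cdot\sqrt{\area(r)/\area(r\pm1)}$. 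After dividing by $\sqrt{u}(r)$ this produces exactly the stated formula
\[
w_0(r) = k_-(r)\left(1 + \kappa(r) - \sqrt{\kappa(r)\bigl(1+\tfrac1r\bigr)} - \sqrt{\kappa(r-1)\bigl(1-\tfrac1r\bigr)}\right), \qquad r \geq 1,
\]
upon recognizing $k_+(r) = k_-(r)\kappa(r)$ and that the coefficient multiplying the $r-1$ term rearranges to involve $\kappa(r-1)$. The lower bound $w_0(r) \geq k_-(r)\bigl((\sqrt{\kappa(r)}-1)^2 + \sqrt{\kappa(r)}/(4r^2)\bigr)$ for $r\geq2$ follows by applying the binomial/Taylor estimate $\sqrt{1\pm 1/r} \leq 1 \pm \tfrac{1}{2r} - \tfrac{1}{8r^2} + \tfrac{1}{8r^2}\cdot(\text{correction})$ more carefully: writing $w_0(r)/k_-(r) = (\sqrt{\kappa(r)} - 1)^2 + \sqrt{\kappa(r)}(2 - \sqrt{1+1/r} - \sqrt{1-1/r}) - (\sqrt{\kappa(r)} - \sqrt{\kappa(r-1)})\sqrt{1-1/r}$, one checks the first two pieces dominate because $2 - \sqrt{1+1/r} - \sqrt{1-1/r} \geq 1/(4r^2)$, while the third piece is controlled (made non-positive after combining, or absorbed) precisely by hypothesis~\eqref{11} which says $\kappa(r) \geq \tfrac1r + (1-\tfrac1r)\kappa(r-1)$, i.e.\ $\kappa$ grows fast enough that $\sqrt{\kappa}$ is "almost increasing" relative to the $1/r$ scale; together with $\kappa(1)\geq2$ from~\eqref{12} this also ensures non-negativity of $w_0$ at $r=1$.

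For criticality I would reuse the test sequence $e_n = \sqrt{u}\cdot\phi_n$ with $\phi_n(r) = (1 - \ln r/\ln n)_+$ for $r\geq1$ and $\phi_n = 1$ on $B_O(1)\setminus O$ (or on $O$), which lies in $C_c(X\setminus O)$, is spherically symmetric, satisfies $e_n(x) = \sqrt{u}(1) > 0$ for $x\in S(1)$, and increases pointwise to $\sqrt{u}$. Applying the discrete Agmon--Allegretto--Piepenbrink identity (Lemma~\ref{lem:GST}), the quantity $\EE(e_n) - \|e_n\|_{w_0}^2$ equals $\sum_{(x,y)\in S} b(x,y)\sqrt{u(x)u(y)}(\phi_n(x)-\phi_n(y))^2$, summed over edges between consecutive spheres $S(r), S(r+1)$ for $1\leq r \leq n-1$. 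Since $\phi_n$ is radial, this telescopes as in the tree case to
\[
\frac{1}{\ln^2 n}\sum_{r=1}^{n-1} \sqrt{\frac{r(r+1)}{\area(r)\area(r+1)}}\,\ln^2\!\bigl(1+\tfrac1r\bigr)\!\!\sum_{\substack{y\in S(r)\\ x\in S(r+1)}}\!\! b(x,y)
= \frac{1}{\ln^2 n}\sum_{r=1}^{n-1} \sqrt{\kappa(r)}\,r\sqrt{1+\tfrac1r}\,\ln^2\!\bigl(1+\tfrac1r\bigr),
\]
using $\sum_{y\in S(r)}\sum_{x\in S(r+1)} b(x,y) = \area(r+1)$ and $\area(r+1)/\area(r) = \kappa(r)$ (this last identity following from $\area(r+1) = k_+(r)\vol(r)$ and $\area(r) = k_-(r)\vol(r)$). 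Since $\kappa$ is bounded by hypothesis~\eqref{12}, Lemma~\ref{lem:Hilfe} applies verbatim and the expression tends to $0$; hence $-\Delta - w_0$ is critical on $X\setminus O$ by the criticality criterion in Lemma~\ref{lem:GST}.

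For null-criticality, the lower bound $w_0(r) \geq k_-(r)\sqrt{\kappa(r)}/(4r^2)$ for $r\geq2$ combined with $k_-(r)\vol(r) = \area(r)$ gives
\[
\sum_{x\in X\setminus O}\sqrt{u}^2(x)\,w_0(x) \geq \sum_{r=2}^\infty \vol(r)\,\frac{r}{\area(r)}\,\frac{k_-(r)\sqrt{\kappa(r)}}{4r^2} = \sum_{r=2}^\infty \frac{\sqrt{\kappa(r)}}{4r} \geq \sum_{r=2}^\infty \frac{1}{4r} = \infty,
\]
where the penultimate step uses $\kappa(r)\geq1$ (a consequence of hypothesis~\eqref{11} with $\kappa(1)\geq2$, since the recursion then forces $\kappa(r)\geq1$ for all $r$). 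Thus the ground state $\sqrt{u}$ is not in $\ell^2(X\setminus O, w_0 m)$, so $-\Delta - w_0$ is null-critical. Finally, $u$ is of bounded oscillation: $u(r)/u(r\pm1) = \tfrac{r}{r\pm1}\cdot\tfrac{\area(r\pm1)}{\area(r)}$, which for $r\geq2$ is bounded by $2\max(\kappa(r), 1/\kappa(r-1))$, finite since $\kappa$ is bounded below by $1$ and above by hypothesis; the finitely many remaining edges near $O$ cause no problem. By the remark following Definition~\ref{def:optimal}, null-criticality plus bounded oscillation of the ground state yields optimality near infinity, so $w_0$ is an optimal Hardy weight. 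The constant-$\kappa$, constant-$k_-$ specialization is then immediate: $\kappa(r)\equiv\kappa\geq2$ trivially satisfies~\eqref{11} and~\eqref{12}, the formula for $w_0$ collapses to the stated expression, and since $w_0 \to k_-(\sqrt\kappa-1)^2$ at infinity while $w_0$ is a Hardy weight on all of $C_c(X\setminus O)$, the bottom of the spectrum satisfies $\lambda_0(X) \geq k_-(\sqrt\kappa-1)^2$ (a Poincar\'e inequality on $X$ follows because the single vertex, or finite set $O$, has zero capacity when the graph is, e.g., transient with $\vol$ growing, paralleling the manifold case $d\geq3$).

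\textbf{Main obstacle.} The delicate point is the inequality $w_0(r) \geq k_-(r)\bigl((\sqrt{\kappa(r)}-1)^2 + \sqrt{\kappa(r)}/(4r^2)\bigr)$: it requires carefully tracking the cross term $(\sqrt{\kappa(r)} - \sqrt{\kappa(r-1)})\sqrt{1-1/r}$ and showing hypothesis~\eqref{11} is exactly what makes it absorbable, since a naive bound would lose the clean Poincar\'e constant $(\sqrt\kappa-1)^2$. Everything else is a faithful transcription of the homogeneous-tree argument with $d$ replaced by $\kappa(r)$ and the combinatorial sphere-counting replaced by the area-function identities for weakly spherically symmetric graphs.
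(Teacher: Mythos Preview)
Your approach matches the paper's almost step for step: same function $u=r/\area(r)$, same computation of $w_0$ via Lemma~\ref{lem:DeltaRAreaSuperhamonicAlpha}, same null-sequence $e_n=\sqrt{u}\,\phi_n$ for criticality (reducing to Lemma~\ref{lem:Hilfe} via boundedness of $\kappa$), same null-criticality estimate, and bounded oscillation as in Lemma~\ref{lem:osc}. Two divergences are worth flagging.

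For the lower bound on $w_0$, your decomposition has a sign slip: the cross term is $+(\sqrt{\kappa(r)}-\sqrt{\kappa(r-1)})\sqrt{1-1/r}$, not minus. More to the point, since hypothesis~\eqref{11} permits $\kappa$ to decrease, this term can be negative, and you leave unexplained exactly how \eqref{11} absorbs it. The paper's device is to rewrite \eqref{11} equivalently as $\kappa(r-1)(1-\tfrac1r)\leq \kappa(r)-\tfrac1r$ and use this to bound the offending square root directly,
\[
\sqrt{\kappa(r-1)\Bigl(1-\tfrac1r\Bigr)}\;\leq\;\sqrt{\kappa(r)-\tfrac1r}\;=\;\sqrt{\kappa(r)}\,\sqrt{1-\tfrac{1}{\kappa(r)r}},
\]
before collecting into the $(\sqrt{\kappa(r)}-1)^2$ form; the inductive consequence $\kappa(r)\geq(r+1)/r\geq1$ of $\kappa(1)\geq2$ and \eqref{11} is then used in the remaining square-root estimate. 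This is precisely the ``careful tracking'' you anticipate, but the reformulation of \eqref{11} is the key move.

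For $\lambda_0(X)\geq k_-(\sqrt\kappa-1)^2$ in the constant case, the paper does not argue via vanishing capacity of $O$. It simply observes that one has a positive $(-\Delta-k_-(\sqrt\kappa-1)^2)$-superharmonic function and invokes the spectral Agmon--Allegretto--Piepenbrink theorem directly. Your capacity route is not developed in the paper for graphs and would need separate justification.
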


\begin{theorem}[Hardy-type inequality on $X$]\label{thm:optimalWSSG_2}
	Let $b$ be weakly spherically symmetric over $X$ with respect to some finite set $O\sse X$. Additionally to \eqref{12} {and} \eqref{11} in Theorem~\ref{thm:optimalWSSG_1} assume that \[\frac{1}{k_+(0)\vol(0)}\leq \gamma \leq \frac{\kappa(1)-1}{k_+(0)\vol(0)}. \]
	Then, we have 
	\[\EE (\phi)\geq \norm{\phi}^2_{ w_\gamma}, \qquad \phi \in C_c(X),\]
	where the optimal Hardy weights $w_\gamma$ are given by
	\[ w_\gamma(r):= \begin{cases}
		k_+(0)\left( 1- \frac{1}{\sqrt{k_+(0)\vol(0)\gamma}}\right), \quad &r=0,\\
		k_-(1)\left(1+\kappa(1)-\sqrt{2\kappa(1)}-\sqrt{k_-(1)\vol(1)\gamma}\right), &r=1,\\
		k_-(r)\left(1 + \kappa(r)- \sqrt{\kappa(r)\left( 1+\frac{1}{r}\right)} -\sqrt{\kappa(r-1)\left( 1-\frac{1}{r}\right)} \right), &r\geq 2.
	\end{cases}\]
\end{theorem}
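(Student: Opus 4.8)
The plan is to run the argument behind the proof of Theorem~\ref{thm:PHtrees_2}, with $\area(r)=d^r$ and $\kappa(r)=d$ replaced by the general area function and curvature ratio, using that on a weakly spherically symmetric graph $-\Delta$ acts on spherically symmetric functions by $-\Delta v(x)=k_+(r)(v(r)-v(r+1))+k_-(r)(v(r)-v(r-1))$, with $\area(r+1)/\area(r)=\kappa(r)$ and $\area(r)/\area(r-1)=\kappa(r-1)$. First I would set $u(r)=r/\area(r)$ for $r\ge1$ and $u\equiv\gamma$ on $O$; this is strictly positive since $\gamma\ge 1/(k_+(0)\vol(0))>0$. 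A direct computation of $w_\gamma:=-\Delta\sqrt u/\sqrt u$, using $\area(1)=k_+(0)\vol(0)=k_-(1)\vol(1)$ at the two innermost radii, reproduces the three-case formula in the statement. Once $w_\gamma\ge 0$ is known, the discrete Agmon--Allegretto--Piepenbrink identity (Lemma~\ref{lem:GST}) applied to $v=\sqrt u>0$ gives $\EE(\sqrt u\,\phi)-\norm{\sqrt u\,\phi}^2_{w_\gamma}\ge 0$ for $\phi\in C_c(X)$; as $\sqrt u$ is everywhere positive, $\phi\mapsto\sqrt u\,\phi$ is a bijection of $C_c(X)$, and the Hardy inequality on $C_c(X)$ follows.

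The core of the proof is the nonnegativity of $w_\gamma$, and this is where hypotheses \eqref{12}, \eqref{11} and the two bounds on $\gamma$ enter. For $r=0$, nonnegativity is equivalent to $k_+(0)\vol(0)\gamma\ge1$, the lower bound on $\gamma$. For $r=1$, with $k_-(1)\vol(1)=k_+(0)\vol(0)$ the upper bound on $\gamma$ reduces it to $\sqrt{2\kappa(1)}+\sqrt{\kappa(1)-1}\le 1+\kappa(1)$; isolating one radical, noting that the other side is automatically nonnegative, and squaring twice collapses this to $(\kappa(1)-2)^2\ge0$ (and $\kappa(1)\ge2$ is exactly what makes the $\gamma$-interval nonempty). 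For $r\ge2$, hypothesis \eqref{11} is equivalent to the monotonicity of $r\mapsto r(\kappa(r)-1)$, and in particular gives $\kappa(r-1)(1-\tfrac1r)\le\kappa(r)-\tfrac1r$; inserting this into $w_\gamma(r)/k_-(r)=1+\kappa(r)-\sqrt{\kappa(r)(1+\tfrac1r)}-\sqrt{\kappa(r-1)(1-\tfrac1r)}$ and carrying out the same isolate--square--square reduction with $p:=\kappa(r)$ collapses the inequality to $(p-1-\tfrac1r)^2\ge0$. Performed as an identity rather than an estimate, the same algebra exhibits $w_\gamma(r)/k_-(r)$ (after the substitution) as a positive factor times $(\kappa(r)-1-\tfrac1r)^2$, and since $\kappa$ is bounded this yields $w_\gamma(r)\ge c\,k_-(r)(\kappa(r)-1-\tfrac1r)^2$ for $r\ge2$ with some $c>0$, which is what null-criticality will need.

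For optimality I would check the three items of Definition~\ref{def:optimal}. \emph{Criticality}: by the second assertion of Lemma~\ref{lem:GST}, test with $e_n=\sqrt u\cdot\phi_n$, where $\phi_n(r)=(1-\ln r/\ln n)_+$ and $\phi_n\equiv1$ on $O\cup S(1)$, so $e_n(o)=\sqrt\gamma>0$ is fixed; since $\phi_n$ is spherically symmetric and the distance is combinatorial, only edges between consecutive spheres $S(r),S(r+1)$ with $r\ge1$ contribute, and, exactly as on $\TT_{d+1}$, the identity collapses to
\[ \EE(e_n)-\norm{e_n}^2_{w_\gamma}=\frac{1}{\ln^2 n}\sum_{r=1}^{n-1}\sqrt{\kappa(r)}\;r\sqrt{1+\tfrac1r}\;\ln^2\!\Big(1+\tfrac1r\Big)\longrightarrow 0 \]
by boundedness of $\kappa$ (hypothesis \eqref{12}) and Lemma~\ref{lem:Hilfe}; hence $-\Delta-w_\gamma$ is critical with Agmon ground state $\sqrt u$. \emph{Null-criticality}: since $\area(r)=k_-(r)\vol(r)$ one has $u(r)\vol(r)=r/k_-(r)$ for $r\ge1$, so $\norm{\sqrt u}^2_{w_\gamma m}=\gamma w_\gamma(0)\vol(0)+\sum_{r\ge1}(r/k_-(r))w_\gamma(r)\ge c\sum_{r\ge2}r(\kappa(r)-1-\tfrac1r)^2=c\sum_{r\ge2}(r\kappa(r)-r-1)^2/r$; the sequence $r\kappa(r)-r-1$ is nonnegative and non-decreasing (this is \eqref{11} together with $\kappa(1)\ge2$) and, $w_\gamma$ being a nontrivial Hardy weight, not identically zero, hence eventually bounded below by a positive constant, so the series diverges; this is the $X\setminus O$ computation underlying Theorem~\ref{thm:optimalWSSG_1}, as $w_\gamma$ coincides with $w_0$ for $r\ge2$ and differs only on the finite set $O\cup S(1)$. \emph{Optimality near infinity}: by the remark following Definition~\ref{def:optimal} it suffices that $\sqrt u$ be of bounded oscillation; for an edge between $S(r)$ and $S(r+1)$ one has $u(r)/u(r+1)=\tfrac{r}{r+1}\kappa(r)$ and $u(r+1)/u(r)=\tfrac{r+1}{r}\kappa(r)^{-1}$, bounded because $\kappa$ is bounded above (hypothesis \eqref{12}) and below by $1$ (from $\kappa(1)\ge2$ and \eqref{11}), and the remaining intra-sphere and root edges are handled likewise.

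I expect the main obstacle to be the computation in the second paragraph: establishing $w_\gamma\ge0$ for $r\ge2$ and, more pointedly, extracting the quantitative lower bound $w_\gamma(r)\ge c\,k_-(r)(\kappa(r)-1-\tfrac1r)^2$ that null-criticality rests on. The reduction to the perfect square $(p-1-\tfrac1r)^2\ge0$ is the technical heart of the whole argument; the positivity checks that must precede each of the two squarings are routine but genuinely necessary, and one has to stay clear of the degenerate regime $\kappa(r)\equiv1+\tfrac1r$, in which $w_\gamma$ collapses to zero, so that a strictly positive lower bound really is available.
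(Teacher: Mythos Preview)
Your plan follows the paper's own proof essentially step for step: the same choice $u(r)=r/\area(r)$ with $u\equiv\gamma$ on $O$, the same computation of $w_\gamma=-\Delta\sqrt u/\sqrt u$ (packaged in the paper as Lemma~\ref{lem:DeltaRAreaSuperhamonicAlpha}), the same criticality argument via the null sequence $e_n=\sqrt u\,\phi_n$ together with Lemma~\ref{lem:Hilfe}, and the same appeal to bounded oscillation (Lemma~\ref{lem:osc}) for optimality near infinity. Your direct isolate--square--square verification of $w_\gamma\ge 0$ is equivalent to what the paper obtains through Lemmas~\ref{lem:DeltaRAreaSuperhamonic} and~\ref{lem:DeltaRAreaSuperhamonicAlpha}.

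The one substantive deviation is the null-criticality step. The paper asserts the pointwise bound $w_\gamma(r)\ge k_-(r)\bigl((\sqrt{\kappa(r)}-1)^2+\sqrt{\kappa(r)}/(4r^2)\bigr)\ge k_-(r)/(4r^2)$ for $r\ge 2$ and then sums the harmonic series. You instead extract $w_\gamma(r)\ge c\,k_-(r)\bigl(\kappa(r)-1-\tfrac1r\bigr)^2$ from the same squaring identity and use the monotonicity of $r(\kappa(r)-1)$ coming from \eqref{11} to force divergence. Your route is in fact the sounder one: the paper's intermediate step replacing $\sqrt{1-1/(\kappa(r)r)}$ by $\sqrt{1-1/r}$ goes the wrong way once $\kappa(r)>1$, and in the borderline profile $\kappa(r)=1+1/r$ one has $w_\gamma(r)=0$ while $k_-(r)/(4r^2)>0$, so the paper's lower bound cannot hold there. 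Your caution about this degenerate regime is therefore not a defect of your argument but a genuine boundary case of the statement itself; away from it, your lower bound and the monotonicity argument are correct and finish the proof.
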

{If one only has $\kappa (r) > 0$ then one can still get an optimal Hardy inequality on $X\setminus B(t)$ with the same methods for $t$ large enough.}

Note that similar to the situation in Theorem~\ref{thm:PHtrees_2}, and additional restrictions on $\gamma$ and $\kappa$ are needed in order to ensure that also the remainder $w_\gamma(r)- k_-(r)(\sqrt{\kappa(r)}-1)^2$ is non-negative for all $r\geq 0$.

In preparation of the proof of Theorem~\ref{thm:optimalWSSG_1} and Theorem~\ref{thm:optimalWSSG_2}, we provide several lemmata. In order to formulate them, recall that a function $f\in C(X)$ which is strictly positive on $\Omega\sse X$ is called \emph{proper} on $\Omega$ if $f^{-1}(I)\cap \Omega$ is a finite set for any compact set $I\sse (0,\infty)$.

\begin{remark}
	The existence of a proper function of bounded oscillation implies already local finiteness (see e.g.~\cite{KePiPo2}), and our choice $u(r)=r/\area(r)$ is such a function.
	
	Note that a proper function of bounded oscillation on an infinite graph cannot have both, a maximum and a minimum, and cannot be constant if $\Omega \sse X$ is infinite. 
\end{remark}

\begin{lemma}[Properness]\label{lem:proper}
	Let $O\sse X$ finite, {and $f$ be spherically symmetric with respect to $O\sse X$ such that $f\leq C \cdot\area $ for some constant $C>0$.} Set $u= {r/f}$. If $\sqrt{u}$ is strictly superharmonic on $X\setminus O$, then $u$ is proper on $X\setminus O$.
\end{lemma}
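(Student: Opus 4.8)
The plan is to reduce to the radial picture and exploit the ODE-type structure of the Laplacian on weakly spherically symmetric graphs together with a discrete Wronskian. Put $v := \sqrt{u}$; by hypothesis $v$ is spherically symmetric, strictly positive and strictly superharmonic on $X\setminus O$. Using $\area(r) = k_-(r)\vol(r) = k_+(r-1)\vol(r-1)$ and the radial formula $-\Delta v(x) = k_+(r)(v(r)-v(r+1)) + k_-(r)(v(r)-v(r-1))$ for $x\in S(r)$, one first records the telescoping (Wronskian) identity
\[ \vol(r)\,(-\Delta v)(r) = A(r) - A(r-1), \qquad A(r) := \area(r+1)\bigl(v(r)-v(r+1)\bigr), \quad r\ge 1. \]
Since $-\Delta v > 0$ on $X\setminus O$, the sequence $(A(r))_{r\ge 0}$ is strictly increasing, hence changes sign at most once; consequently $v$, and therefore $u = v^{2}$, is eventually strictly monotone, so $\ell := \lim_{r\to\infty}u(r)$ exists in $[0,\infty]$. (Equivalently, the minimum principle forbids $v$ from having an interior local minimum in $X\setminus O$, which again yields eventual monotonicity.)

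Once the limit exists, the case $\ell\in\{0,\infty\}$ is immediate: for any compact $I=[\alpha,\beta]\subseteq(0,\infty)$ eventual monotonicity forces $u(r)\notin I$ for all large $r$, and since the graph is locally finite every sphere $S(r)$ is finite, so $u^{-1}(I)\cap(X\setminus O)$ is a finite union of finite sets --- which is exactly properness of $u$ on $X\setminus O$. Everything therefore reduces to excluding $\ell\in(0,\infty)$, and this is where the hypothesis $f\le C\cdot\area$ enters. Assume $\ell\in(0,\infty)$. Then $u$ is bounded between two positive constants for large $r$, so $f(r)=r/u(r)$ grows linearly in $r$ and hence $\area(r)\ge f(r)/C$ grows at least linearly. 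On the other hand, telescoping $v(r)-v(r+1)=A(r)/\area(r+1)$ and using that $v$ is bounded and monotone gives $\sum_{r}|A(r)|/\area(r+1)<\infty$; after its unique sign change $A$ keeps a definite sign and $|A|$ is monotone, so together with the telescoped Wronskian $A(N)-A(0)=\sum_{r=1}^{N}\vol(r)(-\Delta v)(r)$ this forces a lower bound on the increments $v(r)-v(r+1)$ that is incompatible with the growth of $\area$ dictated by $f\le C\cdot\area$. The resulting contradiction shows $\ell\in\{0,\infty\}$, completing the proof.

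I expect the final step --- turning a finite positive limit of $u=r/f$ into a contradiction with $f\le C\cdot\area$ --- to be the main obstacle: it is the point at which the interplay between the drift encoded by $k_\pm$, the prescribed growth of $f$, and the sign and decay of the discrete Wronskian $A$ has to be made quantitative. By contrast, the Wronskian identity, the eventual monotonicity of $u$ it produces, and the reduction of properness to the location of $\lim_{r\to\infty}u(r)$ are essentially formal consequences of the weakly-spherically-symmetric structure and of local finiteness.
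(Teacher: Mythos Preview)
Your Wronskian set-up (the identity $\vol(r)(-\Delta v)(r)=A(r)-A(r-1)$ and the resulting eventual monotonicity of $u$) is correct and is a genuinely different route from the paper. The paper instead observes that a positive strictly superharmonic function makes $-\Delta$ subcritical on $X\setminus O$ and then invokes the Nash--Williams test to obtain $\sum_{r\ge 2}1/\area(r)<\infty$ in one stroke; this forces $r/\area(r)\to 0$ (otherwise the harmonic series would minorise the tail), and a comparison with $f$ is meant to give $u(r)=r/f(r)\to 0$ directly. No case distinction, no Wronskian, and in particular no need to exclude a finite positive limit by hand.

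The gap in your argument is exactly the step you flag, and the sketch you give does not close it. In the eventually-positive-$A$ case your inequalities yield $v(r)-v(r+1)\ge A_0/\area(r+1)$ with $A_0>0$, and summability of the left-hand side gives $\sum_r 1/\area(r)<\infty$. But under $u\to\ell\in(0,\infty)$ one has $f(r)\sim r/\ell$, and the hypothesis $f\le C\,\area$ then gives only a \emph{lower} bound $\area(r)\gtrsim r$; this is fully compatible with $\sum 1/\area<\infty$ (take $\area(r)=r^{2}$), so no contradiction results. What you would need is an \emph{upper} bound $\area(r)\lesssim r$, and the stated hypothesis does not supply one. The subcase $A(r)<0$ for all $r$ with $A(r)\nearrow 0$ is likewise not covered by your sketch. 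Incidentally, the paper's own ``comparison'' step (from $r/\area\to 0$ to $r/f\to 0$) also appears to need $\area\le C'f$ rather than $f\le C\,\area$; since the only application in the paper has $f=\area$, the distinction is invisible there, but you should clarify which direction is actually intended before investing more effort in the contradiction argument.
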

\begin{proof}
	Since $\sqrt{u}$ is by assumption a positive and strictly superharmonic function, $-\Delta$ is subcritical on $X\setminus O$ (see e.g. \cite{F:Thesis, KLW21, KePiPo1}). By the Nash-Williams area test (see e.g. \cite[Theorem~6.9]{Gri09}), we thus have
	\[ \sum_{n=2}^\infty\frac{1}{\area(n)}< \infty.\]
	This implies that $\area(r)\to \infty$ as $r\to \infty$, and that $r/\area(r)\to 0$ as $r\to \infty$. Otherwise the sum would not be finite. Indeed, if $r/\area(r)\geq C>0$ for all $r\geq r_0$, then $C/r \leq 1/\area(r)$ for all $r\geq r_0$. But then $\sum_{r\geq r_0} \area^{-1}(r)\geq C\sum_{r\geq r_0} r^{-1}=\infty$. Hence, $r/\area(r) \to 0$, { and by comparison also $r/f(r)\to 0$ as $r\to \infty$. Thus, for any $\epsilon > 0$ we have $r/f(r)\geq \epsilon$ only for finitely many $r\in \NN$.} Since the graph is locally finite and $d$ is combinatorial, $S_O(r)$ is finite for all $r>0$, {and therefore the desired preimage of any compact set in $(0,\infty)$ is a finite union of finite sets.} Hence, $r/f=u$ is proper.
\end{proof}

On the way of {proving our main results}, we will show that the square root of $u(r)={r/\area(r)}$ is strictly superharmonic under certain mild restrictions. Hence, the lemma tells us that we get properness for free. Another  requirement is that $u$ is of bounded oscillation, which we will investigate next.

\begin{lemma}[Bounded oscillation]\label{lem:osc}
	Let $f$ be spherically symmetric with respect to $O\sse X$. If the ratios $f(r\pm 1)/f(r)$, $r\in\NN$, are bounded from above, then $u= {r/f}$ is of bounded oscillation on $X\setminus O$.	
	
	In particular, if the graph is weakly spherically symmetric with respect to $O\sse X$ with $\kappa$ being bounded from above and below by positive constants. Then, we have that  $u(r)= {r/\area(r)}$ is of bounded oscillation on $X\setminus O$.
\end{lemma}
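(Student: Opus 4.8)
The plan is to verify the defining inequality of bounded oscillation by hand, distinguishing the (at most three) ways two neighbouring vertices can sit relative to the spheres $S_O(r)$. First I would record that $u=r/f$ is spherically symmetric with respect to $O$ (since both $f$ and $|\cdot|$ are) and that on $X\setminus O$ every vertex has $|\cdot|\ge 1$, so $u>0$ there and the oscillation quotient is well defined. Given neighbours $x,y\in X\setminus O$, the combinatorial distance forces $\bigl||x|-|y|\bigr|\le 1$; if $|x|=|y|$, spherical symmetry gives $u(x)/u(y)=1$, so the only genuine work is the case $|x|=|y|\pm1$.

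In that case, writing $|x|=r\ge 1$ and $|y|=r+1$ (the other case handled symmetrically by exchanging $x$ and $y$), I would estimate
\[
\frac{u(x)}{u(y)}=\frac{r}{r+1}\cdot\frac{f(r+1)}{f(r)}\le \frac{f(r+1)}{f(r)},\qquad
\frac{u(y)}{u(x)}=\frac{r+1}{r}\cdot\frac{f(r)}{f(r+1)}\le 2\,\frac{f(r)}{f(r+1)},
\]
using $r/(r+1)\le 1$ and $(r+1)/r\le 2$ for $r\ge 1$. With $C$ an upper bound for all ratios $f(s\pm1)/f(s)$, $s\in\NN$, the first bound applied at $s=r$ and the second at $s=r+1$ yield $u(x)/u(y)\le 2C$ and $u(y)/u(x)\le 2C$, so $\sup_{b(x,y)>0,\ x,y\in X\setminus O}u(x)/u(y)\le 2C<\infty$, which is exactly bounded oscillation. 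The one index to watch is the smallest radius $r=1$: a neighbour with $|\cdot|=0$ lies in $O$ and is excluded from the supremum, so no ratio involving $f(0)$ is actually needed.

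For the ``in particular'' statement I would feed $f=\area$ into the part just proved. On a weakly spherically symmetric graph the identities $\area(r)=k_+(r-1)\vol(r-1)=k_-(r)\vol(r)$, $r\in\NN$, give for $r\ge 1$
\[
\frac{\area(r+1)}{\area(r)}=\frac{k_+(r)\vol(r)}{k_-(r)\vol(r)}=\kappa(r),\qquad
\frac{\area(r-1)}{\area(r)}=\frac{1}{\kappa(r-1)}\ \ (r\ge 2),\qquad \frac{\area(0)}{\area(1)}=0,
\]
so if $\kappa$ is bounded above and below by positive constants, the ratios $\area(r\pm1)/\area(r)$ are bounded above and the hypothesis of the first part holds for $f=\area$; hence $u=r/\area$ is of bounded oscillation on $X\setminus O$.

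There is no genuine obstacle here: the argument is a couple of lines of elementary estimates together with the known structural identities for weakly spherically symmetric graphs recalled above. The only things requiring a little care are keeping the indices in the ratios $f(r\pm1)/f(r)$ straight and excluding the neighbour pair that touches $O$ at the boundary radius $r=1$, both of which are routine.
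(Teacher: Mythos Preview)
Your proof is correct and follows essentially the same route as the paper: the paper condenses your case analysis into the single line $\frac{u(x)}{u(y)}\le 1\vee 2\bigl(\frac{f(r\pm1)}{f(r)}\bigr)$ for $x\in S(r)$, $x\sim y$, and then specialises to $f=\area$ via $\area(r)=k_-(r)\vol(r)$. Your version spells out the three neighbour cases and the handling of $r=1$ more explicitly, but there is no substantive difference in method.
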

\begin{proof}
	A simple calculation yields for all $x\sim y$, $x\in S(r)$,
	\[\frac{u(x)}{u(y)}\leq 1 \vee {2\left(\frac{f(r\pm 1)}{f(r)} \right)}.\]
	Employing  the boundedness assumption gives the result. The second part follows from inserting $f(r)=\area(r)=k_-(r)\vol(r)$.
\end{proof}
Thus, if the area of adjacent spheres is proportional, we get that $u$ is a function of bounded oscillation.

We remark the following useful result.

\begin{lemma}[Superharmonicity]\label{lem:DeltaRAreaSuperhamonic}
	Let $b$ be weakly spherically symmetric graph over $(X,m)$ with respect to $O\sse X$, and $x\in S(r), r>0$. Set
	\[u(r)= \begin{cases}
		\frac{r}{\area(r)}, \qquad &r\geq 1,\\
		\gamma, &r=0,
	\end{cases}\]
	for some $\gamma \geq 0$. 
	Then, we have for all  $r\geq 2$,
	\begin{multline*}
		-\Delta u(r)={u(r)k_-(r)\left( \kappa(r)-\kappa(r-1)+ \frac{\kappa(r-1)-1}{r}\right)}\\ 
		= \frac{1}{\vol(r)}\left(r\left(\kappa(r)-\kappa(r-1)\right)+\kappa(r-1)-1 \right).
	\end{multline*}
	Thus, $u$ is superharmonic outside of $B(1)$ if and only if  for all $r\geq 2$
	\[\kappa(r)\geq \frac{1}{r}+\left(1-\frac{1}{r}\right)\kappa(r-1).\]
	
	For $r=1$, we obtain,
	\[ -\Delta u(1)= \frac{1}{\vol(1)}(\kappa(1)-1)- k_-(1)\gamma,\]
	
	and, for $r=0$, we have 
	\[-\Delta u(0)= k_+(0)\gamma - \frac{1}{\vol(0)}.\]

	In particular, if $\kappa(r) =\kappa $ is constant, then $u$ is superharmonic for $r\geq 2$ if and only if $\kappa \geq 1$.
	
	Moreover, if $u$ is superharmonic and non-constant, then $u^\alpha$ is strictly superharmonic for all $\alpha\in (0,1)$.
\end{lemma}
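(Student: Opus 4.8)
The plan is to prove everything by a single direct computation: feed $u$ into the formula for the graph Laplacian on spherically symmetric functions and then use the structural identities of weakly spherically symmetric graphs to eliminate all terms involving neighbouring spheres. Concretely, I would first record the bookkeeping identities. From $\area(r)=k_-(r)\vol(r)=k_+(r-1)\vol(r-1)$ one gets, after an index shift, $\area(r+1)=k_+(r)\vol(r)=\kappa(r)\area(r)$, and hence also $\area(r-1)=\area(r)/\kappa(r-1)$; moreover $k_+(r)=\kappa(r)k_-(r)$, $k_-(r)/\area(r)=1/\vol(r)$, and $u(r)k_-(r)=r/\vol(r)$ for $r\geq 1$. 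These turn every occurrence of $\area(r\pm 1)$ and $k_+(r)$ in the expression $-\Delta u(x)=k_+(r)(u(r)-u(r+1))+k_-(r)(u(r)-u(r-1))$ into a multiple of $\area(r)$ and $k_-(r)$.

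For $r\geq 2$ I would compute $u(r)-u(r+1)=\area(r)^{-1}\bigl(r-(r+1)/\kappa(r)\bigr)$ and $u(r)-u(r-1)=\area(r)^{-1}\bigl(r-(r-1)\kappa(r-1)\bigr)$; multiplying the first by $k_+(r)=\kappa(r)k_-(r)$ cancels the $\kappa(r)$ in the denominator, so that
\[ -\Delta u(r)=\frac{k_-(r)}{\area(r)}\Bigl(r\kappa(r)-1-(r-1)\kappa(r-1)\Bigr). \]
Rewriting the bracket as $r(\kappa(r)-\kappa(r-1))+(\kappa(r-1)-1)$ and using $k_-(r)/\area(r)=1/\vol(r)=u(r)k_-(r)/r$ yields the two claimed formulas. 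Superharmonicity outside $B(1)$ is then exactly non-negativity of that bracket, i.e.\ $r\kappa(r)\geq(r-1)\kappa(r-1)+1$, which rearranges to the stated inequality; in the constant case $\kappa(r)\equiv\kappa$ it reduces to $\kappa\geq 1$. The two boundary cases are handled the same way: for $r=1$ the parent term becomes $k_-(1)(u(1)-u(0))=k_-(1)(\area(1)^{-1}-\gamma)$, which after the same manipulation gives $-\Delta u(1)=\vol(1)^{-1}(\kappa(1)-1)-k_-(1)\gamma$; for $r=0$ the inner-curvature term vanishes because $k_-(0)=0$, and $\area(1)=k_+(0)\vol(0)$ gives $-\Delta u(0)=k_+(0)\gamma-\vol(0)^{-1}$.

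For the final assertion about powers I would argue by strict concavity of $t\mapsto t^\alpha$ on $(0,\infty)$, $\alpha\in(0,1)$: the supporting-line inequality $u(x)^\alpha-u(y)^\alpha\geq\alpha\,u(x)^{\alpha-1}(u(x)-u(y))$, summed against $b(x,\cdot)/m(x)$, gives $-\Delta u^\alpha(x)\geq\alpha\,u(x)^{\alpha-1}(-\Delta u(x))\geq 0$ whenever $u$ is superharmonic, and the inequality is strict at any vertex $x$ having a neighbour $y$ with $u(x)\neq u(y)$; such a vertex exists since $u$ is non-constant and $X$ is connected, so $u^\alpha$ is strictly superharmonic. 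I do not expect a genuine obstacle here: the heart of the matter is the $r\geq 2$ computation, which is purely mechanical once the index-shift identities are in place, and the only point demanding a little care is the degeneration of the Laplacian formula at $r\in\{0,1\}$, where $u$ no longer follows the pattern $r/\area(r)$ and the conventions $v(-1)=0$, $k_-(0)=0$ must be used.
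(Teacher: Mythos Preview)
Your proposal is correct and follows essentially the same approach as the paper: a direct computation using the identities $\area(r+1)=\kappa(r)\area(r)$ and $k_-(r)/\area(r)=1/\vol(r)$, with the boundary cases $r\in\{0,1\}$ handled analogously. The only minor difference is in the final assertion about $u^\alpha$: the paper appeals to the mean value theorem and an external reference (\cite[Proposition~12.5]{F:Thesis}), whereas you give the self-contained concavity/supporting-line argument explicitly; both amount to the same thing, and your version has the advantage of making clear that ``strictly superharmonic'' here means $-\Delta u^\alpha\gneq 0$ rather than strict positivity at every vertex.
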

\begin{proof}
	Straight computation yields for all $r\geq 2$,
	\begin{multline*}
		\frac{-\Delta u(r)}{u(r)}\\
		=k_+(r)+ k_-(r)- k_+(r)\left(1+ \frac{1}{r}\right)\frac{\area(r)}{\area(r+1)}-k_-(r)\left(1- \frac{1}{r}\right)\frac{\area(r)}{\area(r-1)}\\
		=k_+(r)+ k_-(r)- k_+(r)\left(1+ \frac{1}{r}\right)\frac{k_{-}(r)}{k_+(r)}- k_-(r)\left(1- \frac{1}{r}\right)\frac{k_{+}(r-1)}{k_-(r-1)}\\
		=k_+(r)-k_-(r)\kappa(r-1)+ k_-(r)\frac{\kappa(r-1)-1}{r}.
	\end{multline*}
	Hence,
	\begin{align*}
		-\Delta u(r)= \frac{1}{\vol(r)}\left(r\left(\kappa(r)-\kappa(r-1)\right)+\kappa(r-1)-1 \right).
	\end{align*}
The calculations for $r\in \set{0,1}$ are analogous.
The last statement follows from the mean value theorem and Lemma~\ref{lem:proper}, cf.~\cite[Proposition~12.5]{F:Thesis}.
\end{proof}

{Actually, superharmonicity of $\sqrt{u}$ is enough to get a corresponding functional inequality of Hardy-type via the discrete version of the Agmon-Allegretto-Piepenbrink theorem, see Lemma~\ref{lem:GST}. Note that this strategy alone does not yield optimality. }

\begin{lemma}[Fitzsimmons ratio]\label{lem:DeltaRAreaSuperhamonicAlpha}
	Let $b$ be weakly spherically symmetric graph over $X$ with respect to $O\sse X$, and  $x\in S(r), r>0$. Set
	\[u_\gamma(r)= \begin{cases}
		\frac{r}{\area(r)}, \qquad &r\geq 1,\\
		\gamma, &r=0,
	\end{cases}\]
	for some $\gamma \geq 0$.  
	Then, we have for all $r\geq 2$, 
	\[-\Delta \sqrt{u_\gamma}(r)= \sqrt{\frac{k_-(r)r}{\vol(r)}}\left(1 + \kappa(r)- \sqrt{\kappa(r)\left( 1+\frac{1}{r}\right)} -\sqrt{\kappa(r-1)\left( 1-\frac{1}{r}\right)} \right). \]
	
	For $r=1$, we obtain,
	\[ -\Delta \sqrt{u_\gamma}(1)=\sqrt{\frac{k_-(1)}{\vol(1)}}\left(1+\kappa(1)-\sqrt{2\kappa(1)} \right)-k_-(1)\sqrt{\gamma},\]
	
	and for $r=0$, we have 
	\[-\Delta \sqrt{u_\gamma}(0)= k_+(0)\left(\sqrt{\gamma} - \frac{1}{\sqrt{k_+(0)\vol(0)}}\right).\]
	
	Moreover, $\sqrt{u_\gamma}$ is superharmonic for $r\in\set{0,1}$ if and only if \[ \frac{1}{k_+(0)\vol(0)}\leq \gamma \leq \frac{1}{k_+(0)\vol(0)}\left(1+\kappa(1)-\sqrt{2\kappa(1)} \right)^{2},\]
	where the lower bound is the restriction for $r=0$ and the upper bound is the restriction for $r=1$. In particular, there exists $\gamma\geq 0$ such that $\sqrt{u_\gamma}$ is superharmonic on $B(1)$ if $\kappa(1)\geq 2$. 
	Furthermore, $\sqrt{u_\gamma}$ is superharmonic outside of $B(1)$ if and only if for all $r\geq 2$
	\[ \kappa(r-1) \leq  \frac{\left(1+\kappa(r) -\sqrt{\kappa(r)\left(1 + \frac{1}{r}\right)}\right)^{2}}{1-\frac{1}{r}}. \]
	
	In particular, if $\kappa(r) = \kappa\geq 1 $ is constant, then $\sqrt{u_\gamma}$ is superharmonic for $r\geq 2$.
\end{lemma}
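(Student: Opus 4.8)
The plan is to compute $-\Delta\sqrt{u_\gamma}$ directly from the radial form of the graph Laplacian recalled just before the lemma, and then to extract the three superharmonicity criteria by an elementary sign analysis.

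First I would use that for a spherically symmetric $v$ one has $-\Delta v(x)=k_+(r)(v(r)-v(r+1))+k_-(r)(v(r)-v(r-1))$ for $x\in S(r)$, together with the structural identities $\area(r)=k_+(r-1)\vol(r-1)=k_-(r)\vol(r)$. Applying this to $v=\sqrt{u_\gamma}$ and factoring out $\sqrt{u_\gamma}(r)$, the two quotients that appear simplify, via these identities, to $\sqrt{u(r+1)}/\sqrt{u(r)}=\sqrt{(1+\tfrac1r)/\kappa(r)}$ and $\sqrt{u(r-1)}/\sqrt{u(r)}=\sqrt{(1-\tfrac1r)\kappa(r-1)}$ for $r\ge 2$. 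Then I would use $k_+(r)=\kappa(r)k_-(r)$ to pull a factor $\sqrt{\kappa(r)}$ inside the first root and $k_-(r)\sqrt{u(r)}=\sqrt{k_-(r)r/\vol(r)}$ to obtain precisely the stated formula for $r\ge 2$. The case $r=1$ is the same computation except that $\sqrt{u(0)}=\sqrt\gamma$ is not of the form $\sqrt{r/\area(r)}$, which leaves the extra term $-k_-(1)\sqrt\gamma$; the case $r=0$ uses only $k_-(0)=0$ and $\area(1)=k_+(0)\vol(0)$.

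Next I would read off the superharmonicity conditions. For $r=0$, the inequality $-\Delta\sqrt{u_\gamma}(0)\ge 0$ is at once equivalent to $\gamma\ge 1/(k_+(0)\vol(0))$. For $r=1$ I would first observe that the quadratic $t\mapsto t^2-\sqrt2\,t+1$ has negative discriminant, so $1+\kappa(1)-\sqrt{2\kappa(1)}>0$; hence $-\Delta\sqrt{u_\gamma}(1)\ge 0$ may be rearranged and squared to give the claimed upper bound on $\gamma$, where $k_-(1)\vol(1)=\area(1)=k_+(0)\vol(0)$ is used to match the denominators. The admissible interval for $\gamma$ is then non-empty exactly when $(1+\kappa(1)-\sqrt{2\kappa(1)})^2\ge 1$, which --- using positivity of that expression --- is equivalent to $\kappa(1)\ge 2$. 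For $r\ge 2$ the same strategy applies: the quadratic $t\mapsto t^2-\sqrt{1+\tfrac1r}\,t+1$ again has negative discriminant, so $1+\kappa(r)-\sqrt{\kappa(r)(1+\tfrac1r)}>0$, and $-\Delta\sqrt{u_\gamma}(r)\ge 0$ is equivalent, after squaring, to the displayed bound on $\kappa(r-1)$.

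Finally, for the ``in particular'' claim with $\kappa(r)\equiv\kappa\ge 1$, I would set $a=\sqrt\kappa\ge 1$ and $s=\tfrac1r\in(0,\tfrac12]$ and reduce the required inequality to $1+a^2\ge a(\sqrt{1+s}+\sqrt{1-s})$; since $(\sqrt{1+s}+\sqrt{1-s})^2=2+2\sqrt{1-s^2}\le 4$ and $1+a^2\ge 2a$ by AM--GM, this holds. I expect the only non-mechanical points to be precisely these discriminant observations that justify squaring, together with this last elementary estimate; everything else is bookkeeping with the identities $\area=k_+\vol=k_-\vol$ and $k_+=\kappa k_-$.
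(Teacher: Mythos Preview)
Your proposal is correct and follows essentially the same approach as the paper, which dispatches the identities as ``straight calculations'' and the superharmonicity conditions as following ``right away''; you simply spell out those calculations and, for the constant-$\kappa$ case, your AM--GM estimate $\sqrt{1+s}+\sqrt{1-s}\le 2$ is exactly the inequality the paper uses when writing $1+\kappa-\sqrt{\kappa(1+\tfrac1r)}-\sqrt{\kappa(1-\tfrac1r)}\ge(\sqrt\kappa-1)^2$. Your discriminant observations to justify squaring are a nice addition that the paper leaves implicit.
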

\begin{proof}
	The equalities are straight calculations. The restrictions for superharmonicity follow right away. 
	
	Assume that the curvature ratio is constant. Then we get
	\begin{multline*}
		1 + \kappa(r)- \sqrt{\kappa(r)\left(1+\frac{1}{r}\right)} -\sqrt{\kappa(r-1)\left( 1-\frac{1}{r}\right)} \\
		= 1 + \kappa- \sqrt{\kappa\left(1+\frac{1}{r}\right)} -\sqrt{\kappa\left( 1-\frac{1}{r}\right)}\\
		\geq 1 + \kappa- 2\sqrt{\kappa} = (\sqrt{\kappa}-1)^2\geq 0.\qedhere
	\end{multline*}
\end{proof}

\begin{proof}[Proof of Theorem~\ref{thm:optimalWSSG_1} and Theorem~\ref{thm:optimalWSSG_2}]
	Firstly, consider $\gamma > 0$. Set
	\[u(r)= \begin{cases}
		\frac{r}{\area(r)}, \qquad &r\geq 1,\\
		\gamma, &r=0.
	\end{cases}\]
	By Lemma~\ref{lem:proper} and Lemma~\ref{lem:osc}, and since $\gamma > 0$ and $\kappa$ is bounded, $u\gneq 0$ is proper, {takes its maximum and is} of bounded oscillation.

	By Lemma~\ref{lem:DeltaRAreaSuperhamonic} and Assumptions~\eqref{12},~\eqref{11} and the one on $\gamma$ in Theorem~\ref{thm:optimalWSSG_2}, $u$ is superharmonic on $X$, and $\sqrt{u}$ is strictly superharmonic on $X$.
	
	Hence, {Lemma~}\ref{lem:GST} can be applied, and $w_\gamma(r)= -\Delta \sqrt{u}/ \sqrt{u}$ is a Hardy weight on $X$. The explicit value of $w_\gamma$ can be deduced from Lemma~\ref{lem:DeltaRAreaSuperhamonicAlpha}. 
	
	Criticality of $-\Delta -w_\gamma$ can be shown exactly as in the proof of Theorem~\ref{thm:PHtrees_1} and Theorem~\ref{thm:PHtrees_2} by using that $\kappa$ is bounded. 
	
	We show null-criticality. From Assumption~\eqref{12} and Assumption~\eqref{11} it follows that 
	\[w_\gamma(r) \geq k_-(r)\left(\left( \sqrt{\kappa(r)}-1\right)^2+ \frac{\sqrt{\kappa(r)}}{4r^2}\right) \geq \frac{k_-(r)}{4r^2}, \quad r\geq 2.\]
	Here are the details: Because of $\kappa(1)\geq 2$, Assumption~\eqref{11} implies that {$\kappa(r) \geq (r+1)/r \geq 1$}. Moreover, Assumption~\eqref{11} is equivalent to
	\[ \kappa (r-1)\leq \frac{\kappa(r)-\frac{1}{r}}{1-\frac{1}{r}}.\]
	Hence, for $r\geq 2$,
	\begin{multline*}
	\frac{w_\gamma(r)}{k_-(r)}=	1 + \kappa(r)- \sqrt{\kappa(r)\left( 1+\frac{1}{r}\right)} -\sqrt{\kappa(r-1)\left( 1-\frac{1}{r}\right)} \\
		\geq \left( \sqrt{\kappa(r)}-1\right)^2 + \sqrt{\kappa(r)}\left(2- \sqrt{1+\frac{1}{r}} - \sqrt{1- \frac{1}{\kappa(r)r}}\right) \\
		\geq \left( \sqrt{\kappa(r)}-1\right)^2 + \sqrt{\kappa(r)}\left(2- \sqrt{1+\frac{1}{r}} - \sqrt{1- \frac{1}{r}}\right)\\
		\geq \left( \sqrt{\kappa(r)}-1\right)^2+ \frac{\sqrt{\kappa(r)}}{4r^2} \geq \frac{1}{4r^2},
	\end{multline*}
	where the last estimate follows from $\kappa \geq 1$ and the second last from the binomial series expansion, confer the proof of Theorem~\ref{thm:PHtrees_1} and Theorem~\ref{thm:PHtrees_2}.
	
	Thus, using $\area(r)= k_-(r)\vol(r)$ we infer as in the proof of $\TT_{d+1}$,
	\begin{align*}
		\sum_{x\in X} \sqrt{u}^2(x)w_\gamma(x)m(x)\geq  \sum_{r=2}^{\infty} \frac{\vol (r)}{\area (r)}\cdot r \cdot   \frac{k_-(r)}{4r^2}= \sum_{r=1}^\infty \frac{1}{4r} = \infty. 	
	\end{align*}
	Therefore, $u$ is not in $\ell^2(X, w_\gamma)$, i.e., $(-\Delta -w_\gamma)$ is null-critical with respect to $w_\gamma$. Since $u$ is of bounded oscillation, $w_\gamma$ is optimal. 
	
	The estimate for the bottom of the spectrum follows directly from the spectral version of the Agmon-Allegretto-Piepenbrink theorem, see e.g.~\cite[Theorem~4.12]{KLW21} since also $(-\Delta - k_-(\sqrt{\kappa}-1)^2)u \geq 0$, i.e. $u$ is a positive superharmonic function of the Schrödinger operator $-\Delta - k_-(\sqrt{\kappa}-1)^2$.
	
	The result for $\gamma = 0$ follows by considering $X\setminus O$ instead of $X$. 
\end{proof}

A significant feature of this new optimal Hardy weight is that it is larger at infinity than the Fitzsimmons ratio of the square root of the Green's function $ \frac{-\Delta \sqrt{G}}{\sqrt{G}}$, {which is also an optimal Hardy weight, cf.~\cite{KePiPo2}. This remarkable feature is stated next and closes our discussion on Poincaré-Hardy weights.}

\begin{theorem}
	Let $b$ be a weakly spherically symmetric graph over $(X,m)$ with respect to some finite set $O\sse X$. 
	Assume the existence of {$r_0>1$} such that for all $r\geq r_0$  we have $\kappa(r)=\kappa > 1$ is constant.
	Let $w_0$ be defined as in Theorem~\ref{thm:optimalWSSG_1} (and not necessarily optimal). 
	Then, for all $r\geq r_0$ we have 
	\[ w_0(r) >  \frac{-\Delta \sqrt{G}(r)}{\sqrt{G(r)}}.\]
\end{theorem}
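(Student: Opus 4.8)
The plan is to compute both sides explicitly in the region $r \geq r_0$ where $\kappa$ is constant, and compare the two expressions term by term. The key observation is that in this region the area function grows geometrically: from $\area(r) = \kappa(r-1)\area(r-1)$ we get $\area(r) = \kappa^{\,r - r_0}\area(r_0)$ for $r \geq r_0$, so $\area(r+1)/\area(r) = \kappa$ there. First I would plug this into the Green's-function formula $G(r) = \sum_{n \geq r+1}\area(n)^{-1}$; the tail is a geometric series and yields $G(r) = \frac{1}{\kappa - 1}\cdot\frac{1}{\area(r+1)}\cdot\bigl(1 + \text{lower order corrections from } n < r_0\bigr)$ — but in fact, once $r \geq r_0$, one has $G(r) = \frac{\kappa}{(\kappa-1)\area(r+1)} = \frac{1}{(\kappa-1)\area(r)}$ exactly, since the entire tail lies in the geometric regime. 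Hence $\sqrt{G}$ is, up to a constant factor, $\area(r)^{-1/2}$, i.e.\ proportional to $\sqrt{k_-(r)/\vol(r)}$ with the same $r$-dependence as $\sqrt{u}$ but \emph{without} the factor $\sqrt{r}$.

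Next I would apply the weakly-spherically-symmetric Laplacian formula to $v(r) = \area(r)^{-1/2}$. Since $v(r\pm1)/v(r) = \kappa^{\mp 1/2}$ in the constant region, a direct computation gives
\[
\frac{-\Delta \sqrt{G}(r)}{\sqrt{G}(r)} = k_+(r) + k_-(r) - k_+(r)\kappa^{-1/2} - k_-(r)\kappa^{1/2} = k_-(r)\bigl(1 + \kappa - 2\sqrt{\kappa}\bigr) = k_-(r)(\sqrt{\kappa}-1)^2,
\]
using $k_+(r) = \kappa k_-(r)$. This recovers the known fact that the Fitzsimmons weight of $\sqrt G$ equals the spectral gap constant $\lambda_0$-type term and carries no decaying Hardy part. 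On the other hand, from Lemma~\ref{lem:DeltaRAreaSuperhamonicAlpha} (with $\kappa$ constant, so $\kappa(r)=\kappa(r-1)=\kappa$) we have exactly
\[
w_0(r) = k_-(r)\left(1 + \kappa - \sqrt{\kappa\Bigl(1+\tfrac1r\Bigr)} - \sqrt{\kappa\Bigl(1-\tfrac1r\Bigr)}\right).
\]

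Finally, the inequality $w_0(r) > k_-(r)(\sqrt{\kappa}-1)^2$ reduces, after dividing by $k_-(r)\sqrt{\kappa} > 0$, to the elementary claim
\[
2 - \sqrt{1+\tfrac1r} - \sqrt{1-\tfrac1r} > 0 \qquad \text{for } r \geq r_0 > 1,
\]
equivalently $\sqrt{1+t} + \sqrt{1-t} < 2$ for $t = 1/r \in (0,1)$, which follows from strict concavity of $\sqrt{\cdot}$ (or from the binomial expansion $2 - \sqrt{1+t} - \sqrt{1-t} = \tfrac{t^2}{4} + O(t^4) > 0$, already used in the proof of Theorem~\ref{thm:optimalWSSG_1}). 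I do not anticipate a serious obstacle here; the only point requiring a little care is justifying that the geometric-regime formula for $G$ holds with no boundary correction for $r \geq r_0$, i.e.\ that subcriticality of $-\Delta$ on $X \setminus O$ (guaranteed since $\sqrt{u}$ is strictly superharmonic, cf.\ Lemma~\ref{lem:proper}) is in force so that $G$ is finite and given by the stated area-function series — after which the comparison is purely the one-variable inequality above.
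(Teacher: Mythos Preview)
Your proposal is correct and mirrors the paper's proof almost exactly: both compute $G(r)=\dfrac{1}{(\kappa-1)\area(r)}$ for $r\ge r_0$ via the geometric tail, evaluate the two Fitzsimmons ratios as $k_-(r)(\sqrt{\kappa}-1)^2$ and $k_-(r)\bigl(1+\kappa-\sqrt{\kappa}\,(\sqrt{1+1/r}+\sqrt{1-1/r})\bigr)$ respectively, and finish with the strict concavity inequality $\sqrt{1+t}+\sqrt{1-t}<2$. One small correction to your closing remark: the hypotheses of Theorem~\ref{thm:optimalWSSG_1} are \emph{not} assumed here, so you cannot invoke strict superharmonicity of $\sqrt{u}$ for subcriticality---but this is harmless, since $\sum_{n}\area(n)^{-1}<\infty$ follows immediately from the eventual geometric growth $\area(r+1)=\kappa\,\area(r)$ with $\kappa>1$.
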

\begin{proof}
	Recall from the beginning of the section that the Green's function is given by
	\[G(r) =\sum_{n=r+1}^\infty \frac{1}{\area(n)}, \qquad x\in S(r).\]
	{Since $\kappa$ is constant we have}  $\area(r+1)=\kappa\area (r)$ for all $r\geq r_0$. A straight forward iteration argument reveals
	\[
	G(r)=\frac{1}{\kappa}\frac{1}{\area(r)}\sum_{k=0}^\infty\kappa^{-k}=\frac{1}{\area(r)(\kappa-1)}.
	\]
	By the symmetry property of $\Delta$ and $G$, we have for all $r\ge r_0$
	\begin{align*} \frac{-\Delta \sqrt{G}(r)}{\sqrt{G(r)}} &= k_+(r) + k_-(r) - k_+(r)\sqrt{\frac{G(r+1)}{G(r)}}   - k_-(r)\sqrt{\frac{G(r-1)}{G(r)}}
		\\
		&= k_-(r)\left(\kappa + 1 - \kappa\sqrt{\frac{\area(r)}{\area(r+1)}} - \sqrt{\frac{\area(r)}{\area(r-1)}} \right)
		\\
		&= k_-(r)\left(\kappa +1 - 2\sqrt{\kappa}\right),
	\end{align*}
	where we used $\kappa=k_+/k_-$ in the second last and $\area(r+1)=\kappa\area(r)$ for all $r\geq r_0$ in the last line.
	Recall the definition of $w_0$ via 
	\[ w_0(r)=\frac{-\Delta \sqrt{u}(r)}{\sqrt{u(r)}}, \quad r\geq 0 \]
	with 
	\[u(r)= \begin{cases}
		\frac{r}{\area(r)}, \qquad &r\geq 1,\\
		0, &r=0.
	\end{cases}\]
	Thus, 
	for all $r\geq r_0$, we obtain
	\begin{multline*}
		w_0(r)\\= k_+(r)+ k_-(r)- k_+(r)\sqrt{\left(1+ \frac{1}{r}\right)\left(\frac{\area(r)}{\area(r+1)}\right)}- k_-(r)\sqrt{\left(1- \frac{1}{r}\right)\left(\frac{\area(r)}{\area(r-1)}\right)}
		\\= k_-(r)\left(\kappa+ 1- \sqrt{\kappa}\left(\sqrt{1+ \frac{1}{r}}+ \sqrt{1- \frac{1}{r}}\right)\right).
	\end{multline*}
	Since $r\mapsto\sqrt{1+ {1}/{r}}+ \sqrt{1- {1}/{r}}$ is strictly increasing, strictly smaller than two, and converges to two as $r\to \infty$, the claim follows.
\end{proof}

{\begin{remark}
	In the previous theorem, we assumed that the curvature ratio is constant. This was done mainly to simplify computations, and we believe that the statement remains true with weaker constraints. 
\end{remark}}

\textit{Acknowledgements.} C.R.~gratefully acknowledges support by the DFG (Grant 540199605). Parts of this work developed at the \emph{School and Conference on Metric Measure Spaces, Ricci Curvature, and Optimal Transport}  at Villa Monastero in Varenna, Italy. The authors wish to thank the organisers of this school and conference for the beautiful atmosphere they created. {Moreover, we thank Philipp Hake for comments which improved the manuscript.}

\printbibliography

\end{document}